\numberwithin{equation}{section}
\numberwithin{figure}{section}
\DeclareMathOperator{\Stab}{Stab}
\DeclareMathOperator{\gr}{gr}
\DeclareMathOperator{\idn}{id}
\newtheoremstyle{mythm}{3pt}{3pt}{\itshape}{0pt}{\bfseries}{.}{0.5eM}{}
\theoremstyle{mythm}
\newtheorem{definition}{Definition}[section]
\newtheorem{thm}[definition]{Theorem}
\newtheorem{lem}[definition]{Lemma}
\newtheorem{cor}[definition]{Corollary}
\newtheorem{prop}[definition]{Proposition}
 \newtheorem{claim}[definition]{Claim}
\newtheoremstyle{myrem}{3pt}{3pt}{\normalfont}{0pt}{\bfseries}{.}{0.5em}{}
\theoremstyle{myrem}
\newtheorem{defn}[definition]{Definition}
\newtheorem{rem}[definition]{Remark}
\newcounter{paranum}[section]
\newcommand{\mc}{\mathcal}
\newcommand{\w}{\omega}
\renewcommand{\:}{\colon}
\newcommand{\G}{\T}
\newcommand{\Seq}{S_{\text{eq}}}
\newcommand{\Tsim}{\T/\!\!\!\sim}
\newcommand{\SO}{\operatorname{SO(3)}}
\renewcommand{\S}{\mathbb S}
\newcommand{\ssq}{\ensuremath{\subseteq}}
\newcommand{\alphlist}{\begin{list}{(\alph{enumi})}{\usecounter{enumi}\setlength{\parsep}{2pt}
      \setlength{\itemsep}{1pt} \setlength{\topsep}{5pt}
      \setlength{\partopsep}{3pt}}}
\newcommand{\arablist}{\begin{list}{(\arabic{enumi})}{\usecounter{enumi}\setlength{\parsep}{2pt}
          \setlength{\itemsep}{1pt} \setlength{\topsep}{5pt}
          \setlength{\partopsep}{3pt}}}
\newcommand{\romanlist}{\begin{list}{(\roman{enumi})}{\usecounter{enumi}\setlength{\parsep}{2pt}
              \setlength{\itemsep}{1pt} \setlength{\topsep}{5pt}
              \setlength{\partopsep}{3pt}}}
\newcommand{\Romanlist}{\begin{list}{(\Roman{enumi})}{\usecounter{enumi}\setlength{\parsep}{2pt}
              \setlength{\itemsep}{1pt} \setlength{\topsep}{5pt}
              \setlength{\partopsep}{3pt}}}
\newcommand{\bulletlist}{\begin{list}{$\bullet$}{\setlength{\parsep}{2pt}
                \setlength{\itemsep}{1pt} \setlength{\topsep}{5pt}
                \setlength{\partopsep}{3pt}\setlength{\leftmargin}{15pt}}}
\newcommand{\Alphlist}{\begin{list}{(\Alph{enumi})}{\usecounter{enumi}\setlength{\parsep}{2pt}
      \setlength{\itemsep}{1pt} \setlength{\topsep}{5pt}
      \setlength{\partopsep}{3pt}}}
 \newcommand{\listend}{\end{list}}
\newcommand{\T}{\ensuremath{\mathbb{T}}}
\newcommand{\N}{\ensuremath{\mathbb{N}}}
\newcommand{\R}{\ensuremath{\mathbb{R}}}
\newcommand{\Z}{\ensuremath{\mathbb{Z}}}
\title{On tameness of almost automorphic dynamical systems\\ for general groups} 
\author{Gabriel Fuhrmann$^1$} \address{$^1$Department of Mathematics, Imperial College London, 180 Queen’s Gate, London
  SW7 2AZ, UK. email: gabriel.fuhrmann@imperial.ac.uk} \author{Dominik Kwietniak$^2$}\address{$^2$Faculty of Mathematics and Computer Science, 
  Jagiellonian University in Krakow, ul. {\L}ojasiewicza 6, 30-348 Krak\'ow, Poland. email: dominik.kwietniak@uj.edu.pl}
\begin{document}
\begin{abstract}
 Let $(X,G)$ be a minimal equicontinuous dynamical system, where $X$ is a compact metric space
 and $G$ some topological group acting on $X$.
 Under very mild assumptions, we show that the class of regular almost automorphic extensions of $(X,G)$ contains examples of 
 tame but non-null systems as well as non-tame ones.
 To do that, we first study the representation of almost automorphic systems by means of semicocycles for general groups. 
 Based on this representation, we obtain examples of the above kind
 in well-studied families of group actions.
 These include Toeplitz flows over $G$-odometers where $G$ is countable and residually finite
 as well as symbolic extensions of irrational rotations.
\end{abstract}

\maketitle

The probabilistic concept of independence is at the heart of several fundamental notions in 
ergodic theory like ergodicity, mixing, or positive entropy.
Carried over to topological dynamics, independence gains a more combinatorial flavour and provides a basis for
the local analysis of topological entropy (initiated by Blanchard in \cite{Blanchard1993}) and related mixing properties,
see \cite{HLY2012}.  
For a comprehensive account of 
the combinatorial perspective on independence in topological dynamics with emphasis on entropy, 
see e.g., \cite{KerrLi2007, KerrLiBook}.
We study the absence of independence due to \emph{tameness}.

To gain some intuition, let us briefly discuss tameness for a binary subshift $(X,\sigma)$. 
In this case, a set $J\ssq \Z$ is an \emph{independence set} for $X$ if for each $z\in \{0,1\}^J$ 
there is $x\in X$ with $x_j=z_j$ for every $j\in J$.
The study of independence sets is of fundamental importance as the existence or absence of large
independence sets implies
strong dynamical consequences \cite{KerrLi2007}. 
For example, the subshift $(X,\sigma)$ has positive topological entropy (as introduced
by Adler, Konheim, and McAndrew \cite{AdlerKonheimMcAndrew1965})
if and only if $X$ has an independence set of positive asymptotic density.
At the opposite end, $X$ has zero topological sequence entropy (as introduced by Goodman \cite{Goodman1974})
if and only if $X$ is a \emph{null system}, that is, there is a finite upper bound on the size of independence sets. 
The lack of infinite independence sets is equivalent
to \emph{tameness}, a notion introduced to topological dynamics by K\"ohler \cite{Kohler1995}.


The last decade saw an increased interest in tame systems
(see e.g. \cite{Glasner2006,Glasner2007,GlasnerMegrelishvili2006,Huang2006,KerrLi2007,Romanov2016}; see
also \cite{GlasnerMegreshvili2018} for an up to date account)
revealing their connections to other areas of mathematics like Banach spaces \cite{GlasnerMegrelishvili2012},
circularly ordered systems \cite{GlasnerMegrelishvili2018Monatshefte}, substitutions and tilings, quasicrystals,
cut and project schemes and even model theory and logic
\cite{Aujogue2015,ChernikovSimon2018,GlasnerMegreshvili2013,Ibarlucia2016}.
A major breakthrough in the general understanding of tameness was achieved by Glasner's recent structural
result for tame minimal systems \cite{Glasner2018}.
One of its consequences is that a tame minimal dynamical system
which has an invariant measure is almost automorphic, uniquely ergodic and measure-theoretically isomorphic
to its maximal equicontinuous factor \cite[Corollary~5.4]{Glasner2018} (see also
\cite{Glasner2006,Huang2006,Glasner2007,KerrLi2007} for previous results in this direction).
In fact, a recent result shows that such systems are actually regularly almost automorphic, see
\cite[Theorem~1.2]{FuhrmannGlasnerJagerOertel2018}.

In view of these results, it is natural to ask whether there are non-tame regular almost automorphic extensions
of equicontinuous systems.
Further, as asked in \cite{Huang2006}: if a regular extension is tame, can it be non-null?
So far, 
few regular non-tame extensions are known (see \cite[Corollary~3.7]{FuhrmannGlasnerJagerOertel2018}) and
the only positive answer to the second question is provided by specific Toeplitz shifts
constructed in \cite[Chapter~11]{KerrLi2007}.
We show that the answer to both questions is emphatically \emph{yes}. 
In fact, under very mild assumptions any metric equicontinuous dynamical system $(\T,G)$ 
has almost one-to-one extensions which 
are non-tame as well as extensions which are tame but not null, 
see Theorem~\ref{thm:tame_non-null_example} and Theorem~\ref{thm: non-tame and non-null examples}.
The basis for our construction are so-called semicocycle extensions
which provide straightforward and flexible tools to
obtain a variety of examples of almost automorphic systems.

For $\Z$-actions, it is known that a dynamical system is a semicocycle extension of a group rotation
if and only if it
is an almost automorphic extension of the same rotation \cite{DownarowiczDurand2002} (provided the semicocycle is
\emph{invariant under no rotation}, see Section~\ref{sec: semi-cocycle extensions} for further details).
As a matter of fact, this observation and its proof immediately carry over to actions of abelian groups.
Our first goal is to extend this characterisation to general, non-abelian groups in Section~\ref{sec: semi-cocycle extensions}.
With this generalised notion of semicocycle extensions, we can rather directly construct a plethora of examples
of almost automorphic systems.
As an application, we obtain non-tame as well as tame but non-null symbolic systems such as Toeplitz flows over $G$-odometers 
with countable residually finite $G$ as well as
symbolic extensions of irrational rotations.
En passant, we obtain a generalisation (by completely different means) of the well-known fact that every 
minimal $\Z$-rotation on a compact metrizable monothetic group
allows for an almost automorphic symbolic
extension \cite[Theorem~3.1]{Paul1976}, see
Corollary~\ref{cor: symbolic non-tame extensions}.

\bigskip

\subsection*{Acknowledgements}
This project has received funding from the European Union's Horizon 2020
research and innovation programme under the Marie Sk\l{}odowska-Curie grant
agreement No 750865.
The research of DK was supported by the National Science
Centre, Poland, grant no. 2018/29/B/ST1/01340.
The first ideas of this work were developed during a Research in Pairs stay (R1721) of the two authors
together with Maik Gr\"oger at the 
Mathematisches Forschungsinstitut Oberwolfach in October 2017.
The authors would like to thank the MFO for its hospitality and Maik Gr\"oger for the related discussions and 
by far not only mathematically entertaining time during those two weeks.

\section{Background in topological dynamics}\label{sec: background in topological dynamics}
The statements of this section as well as their proofs can be found in standard references on topological
dynamics and ergodic theory such as \cite{Auslander1988,Glasner2003}.
We say that a triple $(X,G,\Phi)$ is a \emph{topological dynamical system} if $G$ is a  topological group, 
$X$ is a compact Hausdorff topological space and $\Phi\colon G\times X\to X$ is a jointly continuous left action of $G$ on $X$.
Most of the time, we keep the action $\Phi$ implicit. 
That is, we simply refer to $(X,G)$ as a \emph{topological dynamical system} and write $gx$ for the image
$\Phi(g,x)$. 
Given $g\in G$, we refer to the homeomorphism $x\mapsto gx$ also by \emph{$g$-translation} and 
may identify an element $g\in G$ with that homeomorphism. 
We call the set $Gx=\{gx:g\in G\}$ the \emph{$G$-orbit} of $x$ or simply \emph{orbit} of $x\in X$ (under the action of $G$). 
The system $(X,G)$ is said to be \emph{minimal} if for each $x\in X$ the orbit of $x$ is dense in $X$,
that is, we have $\overline {Gx}=X$.

A topological dynamical system $(X,G)$ is {\em effective} if distinct elements $g$ and $g'$ of $G$ define different homeomorphisms,
that is, if for every $g,g'\in G$ with $g\neq g'$ there is $x\in X$ satisfying $gx\neq g'x$. 
We may always assume a system to be effective by identifying each element $g \in G$ with its $g$-translation as mentioned above.
We say that $G$ acts \emph{freely} on $x\in X$ if $gx\neq x$ for all $g\in G$ different from the neutral element
$e_G\in G$.
The dynamical system $(X,G)$ is \emph{free} if $G$ acts freely on every $x\in X$.
It is well known and straightforward to see that if $G$ is abelian and acts minimally on $X$, then $(X,G)$ is
free if and only if $(X,G)$ is effective.

 A topological dynamical system $(X,G)$ is \emph{equicontinuous} if the collection of $g$-translations
 $\{x\mapsto gx : g\in G\}\ssq X^X$
 is a family of maps from $X$ to $X$ which is equicontinuous (with respect to the unique uniformity $\mathscr{U}_X$ that generates the topology on $X$). 
 In this case, we have that for every $\alpha\in\mathscr{U}_X$ there is $\beta\in\mathscr{U}_X$ such that whenever 
 $(x,x')\in\beta$ and $g\in G$ we have $(gx,gx')\in\alpha$. 
 If $(X,G)$ is an equicontinuous topological dynamical system and if $X$ is metrizable, we can choose a 
 compatible metric on $X$ such that each $g$-translation $x\mapsto gx$ is an isometry with
 respect to this metric. 
 For that reason, whenever $X$ is metrizable, we will use the terms equicontinuous and isometric synonymously.
 
 Recall that an \emph{invariant measure} of a topological dynamical system $(X,G)$
 (or: a $G$-\emph{invariant measure} on $X$)
 is a Radon probability measure $\mu$ on $X$ such that
 $\mu(gA)=\mu(A)$ for all $g\in G$ and all $A\in \mathscr B(X)$, where $\mathscr B(X)$ denotes the collection of all
 Borel sets of $X$.
 Given an invariant measure $\mu$, a set $A\in \mathscr B(X)$ is called 
 \emph{invariant (with respect to $\mu$)} if for all $g\in G$ we have
 $\mu(A\,\triangle\, g A)=0$.
 If $\mu$ is an invariant measure such that 
 for every invariant set $A\in \mathscr B(X)$ we have that
 either $\mu(A)=0$ or $\mu(A)=1$, then $\mu$ is referred to as being \emph{ergodic}.
 It is well-known that every minimal equicontinuous system 
 is \emph{uniquely ergodic}, that is, it allows for a unique invariant measure
 which is necessarily ergodic (see also Theorem~\ref{thm: representation of equicont sys}).
 
Let $(X,G)$ and $(Y,G)$ be two topological dynamical systems (with the same acting group $G$). 
A \emph{homomorphism} from $(X,G)$ to $(Y,G)$ is a continuous map $\pi\colon X\to Y$ such that
for every $x\in X$ and $g\in G$ we have $g\pi(x)=\pi(gx)$.
If there is a homomorphism $\pi\colon X\to Y$ which is an \emph{onto} map, then we say that $(Y,G)$ is a
\emph{factor} of $(X,G)$, $(X,G)$ is an \emph{extension} of $(Y,G)$, and that $\pi$ is an
\emph{epimorphism} or a \emph{factor map}. 
In the above situation, the terms \emph{isomorphism}, 
\emph{automorphism}, and \emph{endomorphism} and accordingly, the notion of two systems being \emph{isomorphic} 
have their standard meaning. 
A minimal topological dynamical system $(X,G)$ is \emph{coalescent} if all its endomorphisms are
automorphisms. 
Minimal equicontinuous systems are always coalescent (see, \cite[page 81]{Auslander1988}).
Further, 
factors of minimal systems are minimal and
factors of equicontinuous systems are equicontinuous (\cite[Corollary 2.6]{Auslander1988}). 

Note that if $\pi\colon X\to Y$ is a factor map, then
$R(\pi) = R = \{(x,x')\in X\times X: \pi(x)=\pi(x')\}$ is an \emph{invariant}, \emph{closed equivalence relation} 
(\emph{icer}) on $X$. 
That is to say, the equivalence relation $R$ is a closed subset of $X\times X$ and if
$(x,x') \in R$ and $g\in G$, then $(gx,gx') \in R$.  
Conversely, if $(X,G)$ is a topological dynamical system and $R$ is an icer
on $X$, then the quotient space $X/R$ is a compact Hausdorff
space. 
Furthermore, if $\pi\: X\to X/R$ is the corresponding quotient map,
then $\pi$ is an epimorphism from $(X,G)$ to $(X/R,G)$, where for all $g\in G$ the $g$-translation 
on $X/R$ is given by $\pi(x)\mapsto \pi(gx)$.
Hence, factor maps and icers are just two ways of talking about 
the same thing and we will use them interchangeably.

If $(X,G)$ is a topological dynamical system, there is a smallest icer $\Seq$ 
(known as the {\em equicontinuous structure relation}) such that the factor system
$(X/\Seq, G)$ is equicontinuous (see, for example, \cite[Theorem 9.1]{Auslander1988}).
We refer to $(X/\Seq, G)$ as well as to every system isomorphic to $(X/\Seq, G)$
as the {\em maximal equicontinuous factor} of $(X,G)$. 

We say that a factor map $\pi\colon X\to Y$ is \emph{almost one-to-one} if the set
\begin{align}\label{eq: defn almost automorphic points}
X_0= \{ x\in X\: \pi^{-1}(\{\pi(x)\}) = \{x\} \}
 \end{align}
is dense in $X$. 
In this case, we call the system $(X,G)$ an \emph{almost one-to-one extension} of $(Y,G)$.
A topological dynamical system is called \emph{almost automorphic} if its maximal equicontinuous factor is minimal
and the corresponding factor map $\pi$ is almost one-to-one.
In this case, we call points $x\in X$ with $\pi^{-1}(\{\pi(x)\}) = \{x\}$ \emph{almost automorphic}.
Observe that almost automorphic systems are necessarily minimal.
If the projection $\pi(X_0)$ of the almost automorphic points to the maximal equicontinuous factor
is measurable and of full measure (with
respect to the unique invariant measure on $\pi(X)$), we say
$(X,G)$ is \emph{regular}.
Clearly, every system isomorphic to an almost automorphic system is almost automorphic itself.

\begin{prop}[{cf. \cite[Proposition~1.1]{Paul1976}},
{\cite[V(6.1)5, page 480]{deVries1993}}] \label{prop:almost_automorphic}
 If $(X,G)$ is a minimal topological dynamical system, then the following statements are equivalent.
  \begin{enumerate}[label=(\emph{\alph*})]
 \item $(X,G)$ is almost automorphic.
 \item $(X,G)$ is an almost one-to-one extension of an equicontinuous system.
 \end{enumerate}
Furthermore, if $(X,G)$ is an almost one-to-one extension of a minimal equicontinuous system $(\T,G)$, 
then $(\T,G)$ is the maximal equicontinuous factor of $(X,G)$.
\end{prop}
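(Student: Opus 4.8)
The plan is to read the equivalence straight off the definitions, with the implication (a)$\Rightarrow$(b) being immediate and the real content concentrated in (b)$\Rightarrow$(a) together with the final uniqueness statement. For (a)$\Rightarrow$(b): if $(X,G)$ is almost automorphic, then by definition its maximal equicontinuous factor $(X/\Seq,G)$ is minimal and the canonical factor map $p\colon X\to X/\Seq$ is almost one-to-one; since $(X/\Seq,G)$ is equicontinuous, this already exhibits $(X,G)$ as an almost one-to-one extension of an equicontinuous system.

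For (b)$\Rightarrow$(a) I would start from an almost one-to-one factor map $\pi\colon X\to Y$ onto an equicontinuous system $(Y,G)$. First observe that $Y$ is minimal, being a factor of the minimal system $X$. Since $Y$ is equicontinuous, $R(\pi)$ is an icer with equicontinuous quotient, so the fact that $\Seq$ is the \emph{smallest} such icer forces $\Seq\ssq R(\pi)$; this produces a factorisation $\pi=q\circ p$, where $p\colon X\to X/\Seq$ is the canonical projection and $q\colon X/\Seq\to Y$ is equivariant and onto. Now $X/\Seq$ is minimal as a factor of $X$, and for any $x\in X_0$ the inclusion $p^{-1}(\{p(x)\})\ssq \pi^{-1}(\{\pi(x)\})=\{x\}$ (if $p(x')=p(x)$ then $\pi(x')=q(p(x'))=\pi(x)$) shows that $p$ has singleton fibre over $p(x)$. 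Hence $X_0$ is contained in the set of points with trivial $p$-fibre, which is therefore dense, so $p$ is almost one-to-one and $(X,G)$ is almost automorphic.

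For the \emph{furthermore} part, with $(\T,G)$ minimal equicontinuous and $\pi\colon X\to\T$ almost one-to-one, the same factorisation gives $q\colon X/\Seq\to\T$, and it remains to show that $q$ is an isomorphism, i.e. $\Seq=R(\pi)$. I would first check that $q$ is itself almost one-to-one: for $x\in X_0$ any $z=p(x')$ with $q(z)=q(p(x))$ satisfies $\pi(x')=\pi(x)$, hence $x'=x$, so $q^{-1}(\{q(p(x))\})=\{p(x)\}$; since $p(X_0)$ is dense in $X/\Seq$, the singleton-fibre points of $q$ are dense. The conclusion then follows from the general principle that an almost one-to-one extension between minimal equicontinuous systems is an isomorphism: equicontinuity makes $(X/\Seq,G)$ distal, so no two distinct points are proximal, whereas an almost one-to-one extension is a proximal extension, which forces every $q$-fibre to consist of mutually proximal points and hence to be a singleton. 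Thus $q$ is injective, so an isomorphism, and $(\T,G)$ is the maximal equicontinuous factor.

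The main obstacle is precisely this last step, upgrading ``almost one-to-one'' to ``isomorphism'' for $q$. The cleanest self-contained route invokes the dichotomy between proximality (from the almost one-to-one extension) and distality (from equicontinuity); alternatively, via the representation of minimal equicontinuous systems as rotations on compact groups (Theorem~\ref{thm: representation of equicont sys}), $q$ becomes a quotient by a closed subgroup whose cosets are all fibres of equal cardinality, so a single trivial fibre forces injectivity. I would also take care that every density argument is phrased for general compact Hausdorff $X$, rather than tacitly relying on metrizability or on the existence of a residual set of singleton-fibre points.
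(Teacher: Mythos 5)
The paper does not prove this proposition --- it is quoted from Paul and from de Vries --- so there is no in-text argument to compare against; judged on its own, your proof is correct and complete at the expected level of detail. The decomposition is the natural one: (a)$\Rightarrow$(b) is definitional, (b)$\Rightarrow$(a) follows from minimality of $\Seq$ among icers with equicontinuous quotient together with the fibre inclusion $p^{-1}(\{p(x)\})\ssq\pi^{-1}(\{\pi(x)\})$ for $x\in X_0$, and the \emph{furthermore} part reduces to showing that the induced map $q\colon X/\Seq\to\T$ is an isomorphism. Your key step there --- that an almost one-to-one extension between minimal systems is proximal, while equicontinuous systems are distal, so the fibres of $q$ collapse --- is the standard argument and works for nets in the compact Hausdorff, non-metrizable setting, as you note. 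The alternative you sketch via Theorem~\ref{thm: representation of equicont sys} (fibres of a factor map between minimal equicontinuous systems are cosets of a closed subgroup, hence all of the same cardinality) is the route most consonant with the machinery this paper actually develops, and would let you avoid introducing proximality and distality, which the paper never defines; either way the argument is sound.
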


\subsection{The Ellis semigroup and equicontinuous systems}
By $X^X$ we denote the collection
of all (not necessarily continuous) maps from $X$ to itself. We endow $X^X$ with the product topology which coincides with the topology of
pointwise convergence (a net $(\xi_n)$ in $X^X$ converges to $\xi\in X^X$ if and only if $\xi_n(x)\to\xi(x)$ for every $x\in X$). By Tychonoff’s theorem, $X^X$ is a compact
Hausdorff space. Furthermore, $X^X$ has a semigroup structure defined by composition of maps. 

Given a topological dynamical system $(X,G)$, the \emph{Ellis semigroup} $E(X)$ associated to $(X,G)$ is defined as the closure of the set of $g$-translations
$\{x\mapsto gx :g\in G\}$ in the space $X^X$. 
We may take the liberty to consider elements of $G$ as elements in $E(\T)$.
Note that, in general, there may be elements in $E(X)$ which are neither bijective nor continuous.

 \begin{thm}[{cf. \cite[Theorem~7, p.~54]{Auslander1988}}]\label{thm: ellis groups are homomorphic}
 Let $\pi\colon X\to Y$ be a factor map between two topological dynamical systems $(X,G)$ and $(Y,G)$.
 Then there exists a unique continuous semigroup epimorphism $\Phi\colon E(X)\to E(Y)$
 such that $\pi(\xi x)=\Phi(\xi)\pi(x)$ holds for every $x\in X$ and $\xi\in E(X)$.
\end{thm}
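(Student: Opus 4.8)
The plan is to construct $\Phi$ by an explicit formula and then verify each required property, with the one genuinely substantive point being well-definedness. Write $R=\{(x,x')\in X\times X:\pi(x)=\pi(x')\}$ for the icer induced by $\pi$. For $\xi\in E(X)$ I want to define $\Phi(\xi)\colon Y\to Y$ by $\Phi(\xi)(\pi(x))=\pi(\xi x)$. Since $\pi$ is onto, this prescribes $\Phi(\xi)$ on all of $Y$, but for it to make sense I must first show that $\pi(\xi x)$ depends only on $\pi(x)$ and not on the chosen representative $x$; establishing this is the heart of the argument.

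The key step is the claim that every $\xi\in E(X)$ preserves $R$, i.e. $(x,x')\in R$ implies $(\xi x,\xi x')\in R$. I would prove it by a closedness argument in $X^X$. Consider $A=\{\eta\in X^X:(\eta x,\eta x')\in R \text{ for all } (x,x')\in R\}$. For each fixed pair $(x,x')$ the evaluation $\eta\mapsto(\eta x,\eta x')$ is continuous for the topology of pointwise convergence, and $R$ is closed in $X\times X$; hence $A$, being an intersection of preimages of the closed set $R$, is itself closed. Every $g$-translation lies in $A$ because $\pi$ is a homomorphism: if $\pi(x)=\pi(x')$ then $\pi(gx)=g\pi(x)=g\pi(x')=\pi(gx')$. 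As $E(X)$ is by definition the closure of the translations, we get $E(X)\subseteq A$, which is exactly the claim. This legitimises the definition of $\Phi(\xi)$ and simultaneously yields the intertwining identity $\pi(\xi x)=\Phi(\xi)\pi(x)$ for every $x\in X$ and $\xi\in E(X)$ by construction.

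The remaining properties all reduce to unwinding the definition. Uniqueness is immediate: any $\Psi$ satisfying $\pi(\xi x)=\Psi(\xi)\pi(x)$ obeys $\Psi(\xi)\pi(x)=\Phi(\xi)\pi(x)$ for all $x$, and surjectivity of $\pi$ forces $\Psi(\xi)=\Phi(\xi)$, hence $\Psi=\Phi$. For the semigroup property I compute, for any $x$, that $\Phi(\xi\eta)\pi(x)=\pi(\xi\eta x)=\pi(\xi(\eta x))$, while $\Phi(\xi)\Phi(\eta)\pi(x)=\Phi(\xi)\pi(\eta x)=\pi(\xi(\eta x))$, the second equality applying the defining identity at the point $\eta x$; since $\pi$ is onto, $\Phi(\xi\eta)=\Phi(\xi)\Phi(\eta)$ on all of $Y$. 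For continuity, if $\xi_\lambda\to\xi$ in $E(X)$ then $\xi_\lambda x\to\xi x$ for each $x$, so by continuity of $\pi$ we obtain $\Phi(\xi_\lambda)\pi(x)=\pi(\xi_\lambda x)\to\pi(\xi x)=\Phi(\xi)\pi(x)$ for every $x$, i.e. $\Phi(\xi_\lambda)\to\Phi(\xi)$ pointwise on $Y$, which is precisely convergence in $Y^Y$.

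Finally I confirm that $\Phi$ lands in $E(Y)$ and is onto. From $\pi(gx)=g\pi(x)$ one sees that $\Phi(g)$ is the $g$-translation of $Y$, so the image $\Phi(E(X))$ contains all translations of $Y$. Since $E(X)$ is compact (a closed subset of the compact space $X^X$) and $\Phi$ is continuous, $\Phi(E(X))$ is a compact, hence closed, subset of the Hausdorff space $Y^Y$ that contains every translation; therefore it contains their closure, which is $E(Y)$. In particular each $\Phi(\xi)$ really lies in $E(Y)$, and $\Phi$ is a surjection onto $E(Y)$, completing the proof that $\Phi$ is the desired continuous semigroup epimorphism. The only real obstacle is the closedness/well-definedness step in the second paragraph; everything afterwards is a routine transport of the relevant structure across $\pi$.
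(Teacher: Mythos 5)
The paper does not prove this statement at all: it is quoted as background from Auslander's book (Theorem~7, p.~54 there), so there is no in-paper argument to compare against. Your proof is the standard one and is essentially correct: the well-definedness step via the closed set $A=\{\eta\in X^X:(\eta x,\eta x')\in R\ \text{for all}\ (x,x')\in R\}$ is exactly the right idea, and the verifications of uniqueness, the semigroup property, and continuity are fine. The one logical slip is at the very end: from ``$\Phi(E(X))$ is closed and contains all $g$-translations of $Y$'' you correctly get $\Phi(E(X))\supseteq E(Y)$, but the sentence ``in particular each $\Phi(\xi)$ really lies in $E(Y)$'' does not follow from that inclusion --- it is the reverse containment. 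That direction is, however, immediate from what you already have: since $\Phi$ is continuous and the $g$-translations are dense in $E(X)$, one gets $\Phi(E(X))=\Phi\bigl(\overline{\{g:g\in G\}}\bigr)\subseteq\overline{\Phi(\{g:g\in G\})}=E(Y)$. With that one-line patch the argument is complete.
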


Note that there is a natural left action of $G$ on $E(X)$ given by  $E(X)\ni\xi\mapsto g\xi\in E(X)$ for each  $g\in G$. 
Clearly, $(E(X),G)$ is a topological dynamical system.

 \begin{thm}[{cf. \cite[pp.~52--53]{Auslander1988}}]\label{thm: representation of equicont sys}
 Suppose $(\G,G)$ is a minimal equicontinuous dynamical system.
 Then $E(\G)$ is a compact Hausdorff topological group, each $\xi\in E(\T)$ is a homeomorphism on $\T$,
 and $(E(\G),G)$ is a minimal 
 equicontinuous dynamical system, too. 
 There is also a jointly continuous  action of $E(\G)$ on $\G$ extending the action of $G$ on 
 $\G$ so that $(\G,E(\G))$ is a minimal equicontinuous dynamical system.
 If $\T$ is metrizable, then so is $E(\T)$.
 We further have:
 \begin{enumerate}[label=(\emph{\alph*})]
 \item The system $(\G,G)$ is a factor of $(E(\G),G)$ and for every $\theta\in \G$ the map $p_{\theta}\colon E(\G)\to \G$ 
 given by
 \[
  p_{\theta}(\xi)= \xi \theta
 \]
 is a factor map. 
 Furthermore, let $\Stab(\theta)=\{\xi\in E(\T)\colon\xi\theta=\theta\}$ be the stabiliser of
 $\theta \in \T$ with respect to the action of  $E(\T)$ on $\G$.  
 Then $\Stab(\theta)$ is a closed subgroup of $E(\T)$ and $(E(\T)/\Stab(\theta),G)$
 is isomorphic to $(\T,G)$. 
 In particular, $p_\theta$ is an open map and the push-forward of the Haar measure 
 on $E(\T)$ through the projection onto $E(\T)/\Stab(\theta)$ gives the unique invariant measure $m_\T$ of $(\T,G)$.
 \item If $G$ is abelian, then $E(\G)$ is abelian as well and $(\G,G)$ is isomorphic to $(E(\G),G)$.
 \end{enumerate}
\end{thm}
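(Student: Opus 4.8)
The plan is to exploit equicontinuity to control the Ellis semigroup $E(\T)$, whose a priori structure is merely that of a compact right-topological semigroup. First I would fix a compatible metric $d$ on $\T$ for which every $g$-translation is an isometry, which is possible since $(\T,G)$ is equicontinuous and metrizable (in the non-metrizable case one argues with the uniformity $\mathscr U_\T$ throughout). The decisive observation is that the family of $g$-translations is (uniformly) equicontinuous, so by the Arzelà–Ascoli theorem its closure in $X^X$ consists of continuous maps and, crucially, the topology of pointwise convergence coincides with the topology of uniform convergence on $E(\T)$. In particular every $\xi\in E(\T)$ is a pointwise (hence uniform) limit of isometries and is therefore itself an isometry; being a distance-preserving self-map of the compact space $\T$, it is automatically surjective, hence a homeomorphism. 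This already yields that $E(\T)\subseteq\mathrm{Iso}(\T)$ and that composition is jointly continuous on $E(\T)$ in the uniform topology.

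Next I would upgrade the compact semigroup $E(\T)$ to a compact topological group. Closure under composition and compactness (Tychonoff) are immediate, so it remains to produce inverses. For $\xi\in E(\T)$, equicontinuity of $\{\xi^n:n\geq 1\}$ and compactness give a net with $\xi^{n_k}\to\idn$ — this is the recurrence of the identity in the compact abelian subsemigroup $\overline{\{\xi^n\}}$ of the compact group $\mathrm{Iso}(\T)$, which is the heart of the matter. Composing on the left with the isometry $\xi^{-1}$ (continuous in the uniform topology) yields $\xi^{n_k-1}\to\xi^{-1}$, and since $E(\T)$ is closed and $n_k-1\geq 1$ eventually, we obtain $\xi^{-1}\in E(\T)$. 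Joint continuity of multiplication together with compactness forces inversion to be continuous as well, so $E(\T)$ is a compact Hausdorff topological group; metrizability of $E(\T)$ follows because uniform convergence on a compact metric space is metrizable. The left action $\xi\mapsto g\xi$ is then equicontinuous (translations in a topological group) and minimal, since the $G$-orbit of $\idn$ is by definition dense in $E(\T)$; likewise the evaluation action $\xi\theta=\xi(\theta)$ is jointly continuous by equicontinuity and makes $(\T,E(\T))$ minimal and equicontinuous.

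For part (a), I would study the evaluation map $p_\theta(\xi)=\xi\theta$. It is continuous and $G$-equivariant, and its image is compact and contains the dense orbit $G\theta$, hence equals $\T$; thus $p_\theta$ is a factor map. The fibre $\Stab(\theta)=p_\theta^{-1}(\{\theta\})$ is a closed subgroup, and $p_\theta$ descends to a continuous $G$-equivariant bijection $E(\T)/\Stab(\theta)\to\T$ between compact Hausdorff spaces, which is therefore an isomorphism. Openness of $p_\theta$ follows from openness of the quotient map of the compact group $E(\T)$ onto $E(\T)/\Stab(\theta)$, and pushing forward the Haar measure of $E(\T)$ gives a $G$-invariant Radon probability measure on $\T$; since the group structure makes $(\T,G)$ uniquely ergodic, this pushforward is the measure $m_\T$.

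Finally, part (b): if $G$ is abelian then the commuting $g$-translations have a commuting closure, so $E(\T)$ is abelian. Given $\xi\in\Stab(\theta)$ and any $\theta'=\lim g_n\theta$, continuity of $\xi$ and commutativity give $\xi\theta'=\lim g_n\xi\theta=\lim g_n\theta=\theta'$, so $\xi=\idn$; hence $\Stab(\theta)$ is trivial and $p_\theta$ is an isomorphism $(E(\T),G)\cong(\T,G)$. The main obstacle throughout is the passage from the purely algebraic, one-sidedly continuous semigroup structure to a genuine compact topological group — concretely, the invertibility argument via recurrence of the identity — all of which hinges on equicontinuity through the coincidence of the pointwise and uniform topologies on $E(\T)$.
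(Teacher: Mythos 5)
Your argument is correct and complete in all essentials: the paper itself gives no proof of this theorem, citing it as background from Auslander (pp.~52--53), and your proof is precisely the standard one found there — Arzel\`a--Ascoli to identify the pointwise and uniform topologies on the closure of the equicontinuous family of translations, the recurrence argument $\xi^{n_{k+1}-n_k}\to\idn$ to produce inverses, and the homogeneous-space identification $E(\T)/\Stab(\theta)\cong\T$ for part (a). Nothing further is needed.
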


\begin{rem}\label{rem: action of ET on T}
 For later reference, let us briefly collect some properties of the action of $E(\T)$ on $\T$ provided by the above statement.
 First, note that an immediate consequence of the minimality of $(\T,G)$ is that $E(\T)$ acts \emph{transitively}
 on $\T$, that is, for each pair $\theta_1,\theta_2\in\T$ there is $\xi\in E(\T)$ with 
 $\xi\theta_1=\theta_2$.\footnote{The notion of transitivity should not be confused with the notions of 
 \emph{topological} or \emph{point transitivity} (see \cite[p.31]{Auslander1988}) which are commonly referred to by the abbreviated term \emph{transitive}, too.
 We would like to stress that throughout this work, we refer by transitive solely to the above concept.}
 
 Secondly, notice that the unique $G$-invariant measure $m_\T$ on $\T$ 
 necessarily coincides with the unique invariant measure of the minimal and equicontinuous system
 $(\T,E(\T))$.

 Finally, observe that if $\T$ is metrizable and $\rho$ is a metric on $\T$ with respect to which $G$ acts isometrically
 on $\T$, then the action of $E(\T)$ on $\T$ is also isometric with respect to $\rho$.
\end{rem}
\begin{rem}
 If we have that $\Stab(\theta)$ is trivial, then Theorem~\ref{thm: representation of equicont sys} (a) yields that
 $(\T,G)$ is actually isomorphic to $(E(\T),G)$ and hence free, provided $G$ acts effectively on $\T$.
 The assumption of an action acting freely (in a weak sense) 
 will enter our constructions of tame non-null systems in Section~\ref{seq: examples}.
\end{rem}

\begin{cor}\label{cor: thetan->theta and xin->e}
 Suppose $(\G,G)$ is a minimal equicontinuous dynamical system and $(\theta_n)$ is a net in $\T$ with
 $\theta_n\to \theta$ for some $\theta \in \T$.
 Then there is a subnet $(\theta_m)$ of $(\theta_n)$ and a net $(\xi_m)$ in $E(\T)$
 with $\xi_m \theta_m=\theta$ and $\xi_m\to e$, where $e$ denotes the neutral element in the group $E(\T)$.
\end{cor}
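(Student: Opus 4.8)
The plan is to combine the transitivity of the $E(\T)$-action with the compactness of the Ellis group, and only then to correct the resulting limit by a stabiliser element. Since $(\T,G)$ is minimal and equicontinuous, Theorem~\ref{thm: representation of equicont sys} tells us that $E(\T)$ is a compact Hausdorff topological group acting jointly continuously on $\T$, while Remark~\ref{rem: action of ET on T} records that this action is transitive. Hence for each index $n$ I can choose some $\xi_n\in E(\T)$ with $\xi_n\theta_n=\theta$. Invoking compactness of $E(\T)$, the net $(\xi_n)$ then admits a convergent subnet $\xi_m\to\xi$ for some $\xi\in E(\T)$; passing to the same subnet on the $\theta$-side, we still have $\theta_m\to\theta$ and $\xi_m\theta_m=\theta$ for all $m$.

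The step where care is required --- and what I expect to be the only real obstacle --- is that the limit $\xi$ obtained this way need not be the neutral element $e$, so the subnet $(\xi_m)$ does not yet satisfy the assertion. To identify $\xi$, I would use joint continuity of the action: since $\xi_m\theta_m=\theta$ is constant while $(\xi_m,\theta_m)\to(\xi,\theta)$, passing to the limit gives $\xi\theta=\theta$. In other words, $\xi\in\Stab(\theta)$, and applying $\xi^{-1}$ to $\xi\theta=\theta$ shows $\xi^{-1}\theta=\theta$ as well.

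With this in hand, the fix is to translate the whole net on the left by $\xi^{-1}$. Setting $\tilde\xi_m:=\xi^{-1}\xi_m$, continuity of left translation in the topological group $E(\T)$ yields $\tilde\xi_m\to\xi^{-1}\xi=e$, while at the same time $\tilde\xi_m\theta_m=\xi^{-1}(\xi_m\theta_m)=\xi^{-1}\theta=\theta$. Renaming $\tilde\xi_m$ back to $\xi_m$ produces the subnet $(\theta_m)$ together with the net $(\xi_m)$ in $E(\T)$ satisfying $\xi_m\theta_m=\theta$ and $\xi_m\to e$, as claimed. The essential ingredients are thus exactly the three structural facts supplied by Theorem~\ref{thm: representation of equicont sys} and Remark~\ref{rem: action of ET on T}: transitivity furnishes the $\xi_n$, compactness delivers a convergent subnet, and the group-and-continuity structure lets us absorb the spurious limit $\xi$ by a harmless left translation that fixes $\theta$.
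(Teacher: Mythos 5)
Your proof is correct and is essentially the paper's argument in a different dress: the paper lifts the $\theta_n$ to $E(\T)$ via the factor map $p_\theta$ and sets $\xi_m=\hat\theta\hat\theta_m^{-1}$, which is exactly your $\xi^{-1}\xi_m$ once one identifies your transitivity-chosen $\xi_n$ with $\hat\theta_n^{-1}$ and your limit $\xi$ with $\hat\theta^{-1}\in\Stab(\theta)$. Both versions rest on the same three ingredients — transitivity, compactness of $E(\T)$, and continuity of the group operations together with the (jointly continuous) action — so there is nothing to add.
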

\begin{proof}
Let $p_\theta\colon E(\G)\to\G$ be the factor map determined by $\theta$ (see Theorem~\ref{thm: representation of equicont sys}). 
For each $n$, choose $\hat \theta_n\in p_\theta^{-1}(\theta_n)$.
By compactness of $E(\T)$, there is a subnet $(\hat \theta_m)$ of $(\hat \theta_n)$ with
 $\hat \theta_m\to \hat \theta$ for some $\hat \theta$ which lies in $p_\theta^{-1}(\theta)\ssq E(\G)$, 
 due to the continuity of $p_\theta$.
Let $\xi_m=\hat \theta\hat\theta_m^{-1}$.
 Since $E(\T)$ is a topological group, we have $\xi_m\to e$.
 At the same time, it holds that $\xi_m\theta_m=\xi_m p_\theta(\hat \theta_m)=\xi_m (\hat \theta_m\theta)=
 \xi_m \hat \theta_m\theta=\hat \theta \theta=p_\theta(\hat \theta)=\theta$.
\end{proof}

\begin{cor}\label{cor: stabilizers have empty interior}
 Suppose $(\G,G)$ is a minimal equicontinuous dynamical system
 and $\T$ is infinite.
 Let $\theta,\theta'\in \T$.
 Then in each neighbourhood of the neutral element $e \in E(\T)$ there is $g\in G$ (considered
 as an element of $E(\T)$) with $g\notin \Stab(\theta)\cup \Stab(\theta')$, that is,
 $g\theta\neq \theta$ and $g\theta'\neq\theta'$.
\end{cor}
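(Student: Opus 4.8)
The plan is to show that both stabilisers $\Stab(\theta)$ and $\Stab(\theta')$ are closed subgroups of $E(\T)$ with \emph{empty interior}, and then to exploit the density of the $g$-translations in $E(\T)$ to find a suitable $g\in G$. First I would recall from Theorem~\ref{thm: representation of equicont sys} that $E(\T)$ is a compact Hausdorff topological group, that $\Stab(\theta)$ is a closed subgroup of $E(\T)$, and that $(E(\T)/\Stab(\theta),G)$ is isomorphic to $(\T,G)$. In particular, the coset space $E(\T)/\Stab(\theta)$ is homeomorphic to $\T$ and hence infinite, so $\Stab(\theta)$ has infinite index; the same holds for $\theta'$.

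The main step is the empty-interior claim, which I would deduce from the standard fact that a subgroup $H$ of a topological group has nonempty interior if and only if it is open. Indeed, if $h_0$ is an interior point of $H$, then $h_0^{-1}$ times the interior of $H$ is an open neighbourhood of $e$ contained in $H$; translating this neighbourhood by the elements of $H$ shows that every point of $H$ is interior, so $H$ is open. An open subgroup of a compact group necessarily has finite index, since its cosets form a disjoint open cover from which one may extract a finite subcover. Consequently, were $\Stab(\theta)$ to have nonempty interior, it would be open and hence of finite index in $E(\T)$, contradicting that $E(\T)/\Stab(\theta)\cong\T$ is infinite. Therefore $\Stab(\theta)$, and likewise $\Stab(\theta')$, has empty interior.

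Finally, since $\Stab(\theta)$ and $\Stab(\theta')$ are closed with empty interior, their complements in $E(\T)$ are open and dense; as the intersection of two dense open sets is again dense and open, the set $E(\T)\setminus\bigl(\Stab(\theta)\cup\Stab(\theta')\bigr)$ is open and dense. Given any neighbourhood of $e$, its interior is a nonempty open set meeting this dense open set, which produces a nonempty open subset $V$ of the neighbourhood that avoids both stabilisers. Because the $g$-translations $\{x\mapsto gx:g\in G\}$ are by definition dense in $E(\T)$, some $g\in G$ (viewed as an element of $E(\T)$) lies in $V$, and this $g$ then satisfies $g\theta\neq\theta$ and $g\theta'\neq\theta'$, as required.

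I expect the only genuine obstacle to be the empty-interior claim, that is, pinning down that a proper closed subgroup of the compact group $E(\T)$ cannot be open precisely because $\T$ is infinite; once this is established, the density argument at the end is routine.
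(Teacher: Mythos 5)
Your proof is correct and follows essentially the same route as the paper's: deduce that the stabilisers have empty interior from the fact that a subgroup with nonempty interior is open and hence of finite index in the compact group $E(\T)$, contradicting $E(\T)/\Stab(\theta)\cong\T$ being infinite, and then conclude via the density of $G$ in $E(\T)$. You merely spell out the standard facts that the paper cites as well known.
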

\begin{proof}
 Clearly, every subgroup of $E(\T)$ which has non-empty interior (in $E(\T)$) is, in fact, open (in $E(\T)$).
 Further, it is straightforward to see and well known that an open subgroup of a compact group
 is necessarily of finite index.
 As $\T$ is infinite and
 homeomorphic to $E(\T)/\Stab(\theta)$ and $E(\T)/\Stab(\theta')$ (due to Theorem~\ref{thm: representation of equicont sys}),
 we clearly have that neither $\Stab(\theta)$ nor $\Stab(\theta')$ has finite index.
 Hence, $\Stab(\theta)$ and $\Stab(\theta')$ have empty interior. 
 
  As $\Stab(\theta)$ and $\Stab(\theta')$ are closed, we thus have that the complement of
  $\Stab(\theta)\cup\Stab(\theta')$ is open and dense.
 As $G$ embeds densely in $E(\T)$ (by definition), the statement follows. 
\end{proof}

\section{Semicocycle extensions}\label{sec: semi-cocycle extensions}
In this section, we introduce a representation of almost automorphic systems 
by means of Sturmian-like subshifts with a compact alphabet.
This construction builds upon so-called \emph{semicocycles}.
For $G=\Z$, these have already proved useful in the study of factors of Toeplitz shifts
(see \cite{DownarowiczDurand2002,Downarowicz2005}) but also of almost one-to-one extensions of  
non-equicontinuous systems (see \cite{DownarowiczSerafin2005} for a nice exposition).
For symbolic $\Z$-shifts, similar techniques can be found in \cite{Paul1976,MarkleyPaul1976}.
Here, we extend the constructive idea of semicocycles to general groups, in particular, to non-abelian ones.
We would like to mention that although the underlying ideas are close to those for semicocycle extensions
of $\Z$-actions some care has to be taken in the course of this generalisation.

We will frequently deal with maps $f\: \T_0\subseteq \T\to K$, where $\T$ and $K$ are Hausdorff topological spaces
and $\T_0$ is some subset of $\T$.
Despite the fact that such $f$ may not be defined everywhere in $\T$, we will always 
consider the graph of $f$ as a subset of
$\T\times K$.
That is, we will refer by $\gr f$ to the set 
$\{(\theta,k)\in \T\times K\:\theta\in \T_0,k=f(\theta)\}\ssq \T\times K$.
Moreover, we denote by $F\ssq \G\times K$ the closure of the graph of $f$ as a 
subset of $\T\times K$, that is, $F=\overline{\gr f}$.
Finally, for every $\theta\in\G$, we define the \emph{$\theta$-section of $F$} as the
set $F(\theta)=\{k\in K\: (\theta,k)\in F\}$.
The proof of the following statement is straightforward and left to the reader.
\begin{prop}\label{prop:graphs}
Assume that $\T$ and $K$ are Hausdorff topological spaces and $K$ is compact.
Let $\T_0\ssq\T$ be dense, $f\colon \T_0\ssq \T\to K$ be a mapping, and
$F=\overline{\gr f}\ssq\T\times K$.
\begin{enumerate}
\item Let $\theta\in\T_0$. 
     The function $f$ is continuous at $\theta$ (with respect to the subspace topology on $\T_0$ inherited from $\T$)
     if and only if  $F(\theta)=\{f(\theta)\}$.
\item If $f$ is continuous at every $\theta\in\T_0$, 
	then for every dense set $\T_1\ssq \T$ and every function $g\colon \T_1\to K$ with
	$g(\theta')\in F(\theta')$ for all $\theta'\in \T_1$,
	the set $\gr g$ is dense in $F$.
\end{enumerate}
\end{prop}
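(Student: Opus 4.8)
The plan is to prove both parts by carefully unwinding the definitions of the graph, its closure, and the $\theta$-sections, keeping in mind that $F = \overline{\gr f}$ is closed in $\T \times K$ and that $K$ is compact.

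For part (1), I would argue both implications directly. Suppose first that $f$ is continuous at $\theta \in \T_0$; I want to show $F(\theta) = \{f(\theta)\}$. The inclusion $f(\theta) \in F(\theta)$ is clear since $(\theta, f(\theta)) \in \gr f \ssq F$. For the reverse inclusion, take any $(\theta, k) \in F$. Since $F = \overline{\gr f}$, there is a net $(\theta_\lambda, f(\theta_\lambda))$ in $\gr f$ converging to $(\theta, k)$, so $\theta_\lambda \to \theta$ in $\T_0$ and $f(\theta_\lambda) \to k$. Continuity of $f$ at $\theta$ forces $f(\theta_\lambda) \to f(\theta)$, and since $K$ is Hausdorff limits are unique, giving $k = f(\theta)$. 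Conversely, suppose $F(\theta) = \{f(\theta)\}$ and take any net $\theta_\lambda \to \theta$ in $\T_0$. I want $f(\theta_\lambda) \to f(\theta)$. Here compactness of $K$ enters: any subnet of $(f(\theta_\lambda))$ has a further convergent subnet with some limit $k \in K$, and then $(\theta, k) \in \overline{\gr f} = F$, so $k \in F(\theta) = \{f(\theta)\}$. Since every subnet has a subnet converging to $f(\theta)$, the whole net converges to $f(\theta)$, establishing continuity at $\theta$.

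For part (2), I assume $f$ is continuous at every point of $\T_0$, fix a dense $\T_1 \ssq \T$ and any $g \colon \T_1 \to K$ with $g(\theta') \in F(\theta')$ for all $\theta'$, and I want $\overline{\gr g} = F$. Since $\gr g \ssq F$ and $F$ is closed, $\overline{\gr g} \ssq F$ is immediate; the content is the reverse inclusion. I would take an arbitrary $(\theta, k) \in F$ and produce a net in $\gr g$ converging to it. Because $F = \overline{\gr f}$, first approximate $(\theta, k)$ by points $(\theta_\lambda, f(\theta_\lambda))$ of $\gr f$. The idea is then to replace each $\theta_\lambda \in \T_0$ by nearby points of $\T_1$ using density of $\T_1$, and to control the $K$-coordinate using the continuity of $f$ at $\theta_\lambda$ together with the constraint $g(\theta') \in F(\theta')$.

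The main obstacle is precisely this last coordinatewise diagonal approximation. The subtlety is that $\T_1$ need not be contained in $\T_0$, so I cannot directly compare $g$ with $f$; I only know $g(\theta') \in F(\theta')$. To handle this, I would pass to a neighbourhood-basis diagonal net indexed by pairs (an element of the approximating net for $(\theta,k)$, a basic neighbourhood of $\theta_\lambda$ in $\T \times K$). Given a neighbourhood $U \times V$ of $(\theta_\lambda, f(\theta_\lambda))$, continuity of $f$ at $\theta_\lambda$ (via part (1), $F(\theta_\lambda) = \{f(\theta_\lambda)\}$) guarantees that for $\theta'$ close enough to $\theta_\lambda$ in $\T_0$ the value $f(\theta')$ lies in $V$; but since $\overline{\gr f}$ controls all sections, for $\theta'$ close to $\theta_\lambda$ (now ranging over $\T$) the \emph{entire} section $F(\theta')$ is forced into $V$ by the continuity of $f$ at $\theta_\lambda$. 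Hence for $\theta' \in \T_1$ sufficiently close to $\theta_\lambda$ we get $g(\theta') \in F(\theta') \ssq V$, so $(\theta', g(\theta')) \in U \times V$. Combining a net for $(\theta,k)$ in $\gr f$ with this local $\T_1$-approximation through the standard iterated-limit/diagonal argument yields a net in $\gr g$ converging to $(\theta, k)$, completing the proof. I expect the only genuine care to lie in verifying that continuity at $\theta_\lambda$ indeed squeezes nearby sections $F(\theta')$ into an arbitrary neighbourhood of $f(\theta_\lambda)$, which is itself a short compactness-plus-Hausdorff argument of the same flavour as part (1).
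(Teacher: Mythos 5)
The paper leaves this proof to the reader, and your argument correctly supplies it: part (1) is the standard closed-graph/compact-codomain characterisation of continuity via subnets, and the key step in part (2) — that $F(\theta_\lambda)=\{f(\theta_\lambda)\}$ together with the closedness of $F$ and compactness of $K$ forces the whole section $F(\theta')$ into a prescribed open $V\ni f(\theta_\lambda)$ for all $\theta'$ in a full $\T$-neighbourhood of $\theta_\lambda$ — is exactly the upper-semicontinuity fact needed, and your subnet verification of it is sound. The iterated-limit/diagonal scaffolding is unnecessary: it suffices to observe that any basic open $U\times V$ meeting $F$ already contains a point $(\theta_0,f(\theta_0))$ of $\gr f$, to shrink $U$ so that $F(\theta')\subseteq V$ on it, and then to pick $\theta'\in \T_1\cap U$, giving $(\theta',g(\theta'))\in U\times V$ directly.
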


For the rest of this work, $K$ is always assumed to be a compact Hausdorff space.
We say that $\theta\in\G$ is a \emph{discontinuity point} of $f$ if $F(\theta)$ has more than one element.
We write  
\[
    D_f=\{\theta\in \T : \# F(\theta)>1\}\ssq \T
\]
for the set of all discontinuity points of $f$.
If $f\:\T_0\to \T$ is continuous, we clearly have $\T_0\cap D_f=\emptyset$.
In this case, if $\T_0$ is further dense in $\T$, the mapping
\begin{align}\label{eq: semicocycle extended to complement of Df}
\T\setminus D_f  \to K,  \qquad \theta\mapsto k_\theta,
\end{align}
where $k_\theta$ is such that $F(\theta)=\{k_\theta\}$,
is a well-defined continuous extension of $f$.
For simplicity, we may refer to this mapping by $f$ as well.

Recall that a triple $(\T,G,\theta_0)$ is a \emph{pointed dynamical system} if
$(\T,G)$ is a topological dynamical system and $\theta_0$ is an element in $\T$ with $\overline{G\theta_0}=\T$.
A \emph{($K$-valued) semicocycle} over a pointed dynamical system $(\T,G,\theta_0)$ is a map 
$f\colon G\theta_0\to K$ which is continuous with respect to the subspace
topology on $G\theta_0\ssq \T$.
If $f$ is a semicocycle, then for every $\theta\in\G$ the section $F(\theta)$ is nonempty and
$F(g\theta_0)=\{f(g\theta_0)\}$ for every $g\in G$ (see Proposition~\ref{prop:graphs}).

Given a $K$-valued semicocycle $f$ over a pointed system $(\T,G,\theta_0)$, we now define
a topological dynamical system $(X_f,G)$ associated to $f$.
To that end, observe that $G$ acts from the left
on the product topological space $K^G$ by means of the \emph{shift action}
\[G\times K^G\ni(h,(x_g)_{g\in G})\mapsto \sigma^h((x_g)_{g\in G})=(x_{gh})_{g\in G}\in K^G.\]
By slightly abusing notation, we may identify $f$ with the mapping $g\mapsto f(g\theta_0)$ on $G$ and hence
consider $f$ an element of $K^G$.
We define $(X_f,G)$ as the orbit closure of $f$ under the above shift action.
Throughout this work, given $x\in K^G$, we synonymously refer by $x_g$ and $x(g)$ to the image of $g$ under $x$.

It is natural to ask whether $(X_f,G)$ is an extension of $(\T,G)$.
Towards an answer to this question, we introduce an equivalence relation $\sim$ on $\T$ by putting
\begin{equation}\label{def:sim}
 \theta_1 \sim \theta_2 \text{ if and only if for every } \xi \in E(\T)\text{ we have } F(\xi \theta_1)=F(\xi \theta_2),
\end{equation}
where $E(\T)$ denotes the Ellis semigroup of $(\T,G)$.
We denote the equivalence class of $\theta\in \T$ by $[\theta]$ and say the semicocycle 
$f$ is \emph{invariant under no rotation} if
$\sim$ is the identity relation, that is, if $\#[\theta]=1$ for every $\theta\in\T$. 

\begin{lem}\label{lem:sim is an icer}
If $(\T,G)$ is a minimal equicontinuous system, then the relation $\sim$ is an icer. 
\end{lem}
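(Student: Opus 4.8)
The plan is to verify the three defining properties of an icer in turn: that $\sim$ is an equivalence relation, that it is $G$-invariant, and that it is closed as a subset of $\T\times\T$. The first two are essentially formal, while the closedness is where the real work lies. That $\sim$ is an equivalence relation is immediate from its definition, since reflexivity, symmetry and transitivity of the condition ``$F(\xi\theta_1)=F(\xi\theta_2)$ for every $\xi\in E(\T)$'' are all inherited coordinatewise from equality of sections.

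For invariance I would exploit that $E(\T)$ is a semigroup containing $G$ (via the identification of $g$ with its $g$-translation). If $\theta_1\sim\theta_2$ and $g\in G$, then for any $\xi\in E(\T)$ the element $\xi g$ again lies in $E(\T)$, and by associativity of the action $(\xi g)\theta_i=\xi(g\theta_i)$; applying the hypothesis to the element $\xi g$ yields $F\bigl(\xi(g\theta_1)\bigr)=F\bigl(\xi(g\theta_2)\bigr)$. As $\xi$ is arbitrary, this gives $g\theta_1\sim g\theta_2$. (The same argument in fact shows $\sim$ is invariant under the whole action of $E(\T)$, not merely under $G$.)

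The main obstacle is closedness. Here I would take a net $(\theta_1^{(n)},\theta_2^{(n)})$ in $\sim$ converging to $(\theta_1,\theta_2)$ and show that $F(\xi\theta_1)=F(\xi\theta_2)$ for every fixed $\xi\in E(\T)$. By symmetry it suffices to prove the inclusion $F(\xi\theta_1)\subseteq F(\xi\theta_2)$, and the key idea is to fix the first coordinate \emph{exactly} while needing only approximate control of the second. Concretely, I would apply Corollary~\ref{cor: thetan->theta and xin->e} to $\theta_1^{(n)}\to\theta_1$ and pass to a common subnet of the net of pairs, obtaining $\xi_m\to e$ in $E(\T)$ with $\xi_m\theta_1^{(m)}=\theta_1$. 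Then for each $m$ one has the honest equality $\xi\theta_1=(\xi\xi_m)\theta_1^{(m)}$ in $\T$, whence $F(\xi\theta_1)=F\bigl((\xi\xi_m)\theta_1^{(m)}\bigr)$; since $\theta_1^{(m)}\sim\theta_2^{(m)}$ and $\xi\xi_m\in E(\T)$, this in turn equals $F\bigl((\xi\xi_m)\theta_2^{(m)}\bigr)$. Thus every $k\in F(\xi\theta_1)$ satisfies $k\in F\bigl((\xi\xi_m)\theta_2^{(m)}\bigr)$ for all $m$.

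To conclude I would pass to the limit in the second coordinate. Since $E(\T)$ is a topological group we have $\xi\xi_m\to\xi$, and by the joint continuity of the $E(\T)$-action on $\T$ from Theorem~\ref{thm: representation of equicont sys}, together with $\theta_2^{(m)}\to\theta_2$, it follows that $(\xi\xi_m)\theta_2^{(m)}\to\xi\theta_2$. Hence the points $\bigl((\xi\xi_m)\theta_2^{(m)},k\bigr)$ all lie in the closed set $F=\overline{\gr f}$ and converge to $(\xi\theta_2,k)$, so $(\xi\theta_2,k)\in F$, i.e.\ $k\in F(\xi\theta_2)$. This yields $F(\xi\theta_1)\subseteq F(\xi\theta_2)$; running the same argument with the two coordinates exchanged gives the reverse inclusion, and therefore $(\theta_1,\theta_2)\in\,\sim$. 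The only delicate point is the asymmetry of Corollary~\ref{cor: thetan->theta and xin->e}, which pins down one coordinate exactly but leaves the other merely convergent — this is precisely why I treat one inclusion at a time and rely on the closedness of $F$ (rather than on continuity of $\theta\mapsto F(\theta)$, which is only upper semicontinuous) to pass to the limit.
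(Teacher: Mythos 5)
Your proposal is correct and follows essentially the same route as the paper's own proof: $G$-invariance via the embedding of $G$ in $E(\T)$, and closedness by applying Corollary~\ref{cor: thetan->theta and xin->e} to pin down the first coordinate exactly, using $\xi\xi_m\theta_2^{(m)}\to\xi\theta_2$ together with the closedness of $F$ to get one inclusion of sections, and then interchanging the roles of the two coordinates. The only difference is that you spell out the limit argument and the reliance on joint continuity slightly more explicitly than the paper does.
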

\begin{proof} 
 Since $G$ embeds in $E(\T)$, we easily see that $\sim$ is $G$-invariant.
 It remains to show that $\sim$ is closed in $\T\times \T$.
 Let $(\theta_1,\theta_2)\in \T^2$ be given and suppose there is a net $(\theta_1^n,\theta_2^n)\to(\theta_1,\theta_2)$
 with $\theta_1^n\sim\theta_2^n$.
 Due to Corollary~\ref{cor: thetan->theta and xin->e}, we may assume without loss of generality that there is a 
 net $(\xi_n)$ in $E(\T)$ with $\xi_n\theta_1^n=\theta_1$ and $\xi_n\to e$.
 Then $F(\xi\theta_1)=F(\xi\xi_n\theta_1^n)=F(\xi \xi_n\theta_2^n)$ for every $\xi\in E(\T)$.
 Since $\xi\xi_n\theta_2^n\to\xi\theta_2$ 
 and $F$ is closed, this yields $F(\xi\theta_1)\ssq F(\xi\theta_2)$ for each $\xi\in E(\T)$.
 Interchanging the roles of $\theta_1$ and $\theta_2$, we obtain $F(\xi\theta_2)\ssq F(\xi\theta_1)$ for every $\xi\in E(\T)$ and hence $\theta_1\sim\theta_2$.
\end{proof}
If not stated otherwise, we throughout assume the system $(\T,G)$ to be minimal and equicontinuous.
In this case, given some $\theta_0\in \T$ and a semicocycle $f$ over $(\T,G,\theta_0)$
which is invariant under no rotation,
we refer to the system $(X_f,G)$ as a \emph{semicocycle extension of $(\T,G)$}.
As we will see in Theorem~\ref{thm: equivalence semicocylce extension and almost automorphic system},
a semicocycle extension of $(\T,G)$ is indeed an extension of $(\T,G)$.
The following statement provides a simple but useful sufficient criterion for 
invariance under no rotation.
\begin{prop}\label{prop: criterion imnvariance under no rotation}
 Let $f$ be a semicocycle over $(\T,G,\theta_0)$.
 If for each $\theta_1,\theta_2\in \T$ there is $\xi \in E(\T)$ such that
 $\xi\theta_1 \in D_{f}$ and $\xi\theta_2\notin D_f$, then $f$ is invariant under no rotation.
\end{prop}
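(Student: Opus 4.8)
The plan is to reduce everything to a single separation statement at the level of points. Since $f$ being \emph{invariant under no rotation} means that $\sim$ is the identity relation, it suffices to show that any two \emph{distinct} points $\theta_1,\theta_2\in\T$ satisfy $\theta_1\not\sim\theta_2$. Unwinding the definition \eqref{def:sim}, the pair $\theta_1,\theta_2$ \emph{fails} to be equivalent exactly when there exists \emph{one} element $\xi\in E(\T)$ with $F(\xi\theta_1)\neq F(\xi\theta_2)$. Thus the entire task collapses to producing a single such separating $\xi$ for each pair of distinct points.

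First I would fix distinct $\theta_1,\theta_2\in\T$ and feed this very pair into the hypothesis, which hands us an element $\xi\in E(\T)$ with $\xi\theta_1\in D_f$ and $\xi\theta_2\notin D_f$. Next I would translate these two membership statements into assertions about the sizes of the corresponding sections. By the definition $D_f=\{\theta:\#F(\theta)>1\}$, the condition $\xi\theta_1\in D_f$ gives $\#F(\xi\theta_1)>1$. For the other section I would invoke the standing fact (recorded immediately after the definition of a semicocycle) that every section $F(\theta)$ of a semicocycle is nonempty; combined with $\xi\theta_2\notin D_f$, this upgrades ``$\#F(\xi\theta_2)\le 1$'' to ``$\#F(\xi\theta_2)=1$''. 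Since one section has at least two elements and the other has exactly one, they cannot coincide, so $F(\xi\theta_1)\neq F(\xi\theta_2)$ and hence $\theta_1\not\sim\theta_2$. As $\theta_1,\theta_2$ were arbitrary distinct points, $\sim$ reduces to the diagonal and $f$ is invariant under no rotation.

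I do not expect a genuine obstacle: the argument is essentially a matter of unwinding the definitions of $\sim$ and of $D_f$, with no appeal to the topology of $E(\T)$ beyond the bare statement of the hypothesis. The only point deserving a moment's care is the nonemptiness of $F(\xi\theta_2)$ — without it, ``$\xi\theta_2\notin D_f$'' would only yield $\#F(\xi\theta_2)\le 1$, leaving open the degenerate possibility of an empty section, and it is precisely this nonemptiness (guaranteed because $f$ is a semicocycle) that lets me conclude the two sections are genuinely different rather than merely \emph{possibly} different.
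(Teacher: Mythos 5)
Your proposal is correct and is essentially the paper's own proof: the paper likewise applies the hypothesis to a given pair and concludes $\#F(\xi\theta_1)>1=\#F(\xi\theta_2)$, hence $F(\xi\theta_1)\neq F(\xi\theta_2)$ and $\theta_1\not\sim\theta_2$. You merely spell out the nonemptiness of the sections (which the paper uses implicitly via Proposition~\ref{prop:graphs}), so there is nothing to add.
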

\begin{proof}
 This immediately follows from the fact that, by definition of $D_f$, we have
 \[
  \# F(\xi\theta_1)> 1=\#F(\xi\theta_2). \qedhere
 \]
\end{proof}

The next statement suggests that we may realise invariance under no rotation by possibly changing the 
pointed dynamical system (see also \cite[Theorem~6.5]{Downarowicz2005} for a similar statement
for semicocycles over $\Z$-odometers).
\begin{lem}\label{lem: ensure invariance under no rotation}
 Suppose $(\T,G)$ is a minimal equicontinuous system, $\theta_0\in \T$
 and $f$ is a semicocycle over $(\T,G,\theta_0)$.
 Then $(X_f,G)$ is a semicocycle extension of $(\T/\!\!\sim,G)$.
\end{lem}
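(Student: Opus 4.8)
The plan is to realise the very same point $f\in K^G$ as a semicocycle over the factor $(\T/\!\!\sim,G)$, where now invariance under no rotation holds automatically. Write $\T'=\T/\!\!\sim$, let $q\colon\T\to\T'$ be the quotient map (a factor map by Lemma~\ref{lem:sim is an icer}), and set $\theta_0'=[\theta_0]$. Since factors of minimal equicontinuous systems are minimal and equicontinuous, $(\T',G)$ is minimal equicontinuous, and $\T'=q(\overline{G\theta_0})\ssq\overline{q(G\theta_0)}=\overline{G\theta_0'}$, so $(\T',G,\theta_0')$ is a pointed system. I would define $f'\colon G\theta_0'\to K$ by $f'(q(g\theta_0))=f(g\theta_0)$. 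This is well defined: if $q(g_1\theta_0)=q(g_2\theta_0)$, i.e.\ $g_1\theta_0\sim g_2\theta_0$, then taking $\xi=e$ in the definition \eqref{def:sim} of $\sim$ gives $F(g_1\theta_0)=F(g_2\theta_0)$, and since $g_i\theta_0\in G\theta_0$ these sections are the singletons $\{f(g_i\theta_0)\}$ by Proposition~\ref{prop:graphs}, forcing $f(g_1\theta_0)=f(g_2\theta_0)$. Identifying $f'$ with the map $g\mapsto f'(g\theta_0')=f'(q(g\theta_0))=f(g\theta_0)$, we see that $f'$ and $f$ are literally the same point of $K^G$; hence $X_{f'}=X_f$ as $G$-systems, and it remains only to check that $f'$ is a semicocycle which is invariant under no rotation.

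The main technical step is the continuity of $f'$ on $G\theta_0'$, and I would argue it by nets and compactness. Let $\theta_n'\to g\theta_0'$ in $G\theta_0'$ and choose $\theta_n\in G\theta_0$ with $q(\theta_n)=\theta_n'$, so that $f'(\theta_n')=f(\theta_n)$. Suppose, for contradiction, that $f(\theta_n)\not\to f(g\theta_0)$. Passing to a subnet, we may assume $f(\theta_n)\to k$ for some $k\neq f(g\theta_0)$ (compactness of $K$) and $\theta_n\to\tilde\theta$ for some $\tilde\theta\in\T$ (compactness of $\T$). Continuity of $q$ gives $q(\tilde\theta)=\lim q(\theta_n)=\lim\theta_n'=g\theta_0'$, i.e.\ $\tilde\theta\sim g\theta_0$, so in particular $F(\tilde\theta)=F(g\theta_0)=\{f(g\theta_0)\}$. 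On the other hand $(\theta_n,f(\theta_n))\to(\tilde\theta,k)$ lies in the closed set $F=\overline{\gr f}$, whence $k\in F(\tilde\theta)=\{f(g\theta_0)\}$, contradicting $k\neq f(g\theta_0)$. Thus $f'(\theta_n')\to f'(g\theta_0')$ and $f'$ is a semicocycle over $(\T',G,\theta_0')$.

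For invariance under no rotation I would compare graph closures. Writing $F'=\overline{\gr f'}$ and using $\gr f'=(q\times\idn_K)(\gr f)$ together with continuity of $q\times\idn_K$ and compactness, one gets $F'=(q\times\idn_K)(F)$, hence $F'(\theta')=\bigcup_{\theta\in q^{-1}(\theta')}F(\theta)$ for every $\theta'\in\T'$. Since all $\theta$ in one $\sim$-class share the same section $F(\theta)$ (again $\xi=e$ in \eqref{def:sim}), this collapses to $F'(q(\theta))=F(\theta)$ for all $\theta\in\T$. Let $\Phi\colon E(\T)\to E(\T')$ be the continuous semigroup epimorphism from Theorem~\ref{thm: ellis groups are homomorphic}, so that $\Phi(\xi)q(\theta)=q(\xi\theta)$; then $F'(\Phi(\xi)q(\theta))=F'(q(\xi\theta))=F(\xi\theta)$ for all $\xi\in E(\T)$, $\theta\in\T$. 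If now $\theta_1'=q(\theta_1)$ and $\theta_2'=q(\theta_2)$ satisfy $\theta_1'\sim'\theta_2'$ for the relation $\sim'$ attached to $f'$, then, $\Phi$ being onto, the identity $F'(\xi'\theta_1')=F'(\xi'\theta_2')$ for all $\xi'\in E(\T')$ translates into $F(\xi\theta_1)=F(\xi\theta_2)$ for all $\xi\in E(\T)$, i.e.\ $\theta_1\sim\theta_2$ and so $\theta_1'=\theta_2'$. Hence $\sim'$ is the identity, which finishes the argument. The only point demanding genuine care is the continuity of $f'$, where the passage to subnets essentially uses compactness of both $K$ and $\T$ together with the closedness of $F$.
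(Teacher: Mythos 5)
Your proof is correct and follows essentially the same route as the paper's: define the induced semicocycle on $G[\theta_0]$, check well-definedness, establish $\tilde F([\theta])=F(\theta)$ and continuity, and use the Ellis-semigroup epimorphism $\Phi$ from Theorem~\ref{thm: ellis groups are homomorphic} to deduce invariance under no rotation. You merely spell out the details (the net/compactness argument for continuity and the identity $F'=(q\times\idn_K)(F)$) that the paper delegates to Proposition~\ref{prop:graphs}.
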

\begin{proof}
 Since $\sim$ is an icer on $\T$, we have
 that $(\T/\!\!\!\sim,G)$ is a (necessarily minimal and equicontinuous) factor of $(\T,G)$\footnote[3]{We would like to remark that
 if $\T$ is metrizable, Stone's Metrization Theorem yields that $\T/\!\!\!\sim$ is also metrizable \cite[Theorem~1]{Stone1956}.}
 so that $(\T/\!\!\!\sim,G,[\theta_0])$ is clearly a pointed dynamical system.
 Further, note that $\tilde f\: G[\theta_0]\ssq \T/\!\!\!\sim\, \to K$ given by $g[\theta_0]\mapsto f(g\theta_0)$
 is well-defined and that $\tilde F([\theta])=F(\theta)$ for all $\theta\in \T$.
 It follows that $\tilde f$ is, in fact, continuous,
 by Proposition~\ref{prop:graphs}~(1).
 With the epimorphism $\Phi$ from Theorem~\ref{thm: ellis groups are homomorphic}, we obtain
 $\tilde F(\Phi(\xi)[\theta])=\tilde F([\xi\theta])=F(\xi\theta)$ for all $\xi \in E(\T)$
 which implies that $\tilde f$ is invariant under no rotation.
\end{proof}

The next result is a generalisation of the respective statement for $\Z$-actions
(cf. \cite[Theorem~6.4]{Downarowicz2005} and \cite[Theorem~5.2]{DownarowiczDurand2002}).
The main idea of its proof is as in these references.
However, as the group $G$ is not necessarily abelian, the homogeneous space $\T$ may not possess a 
compatible group structure
(see Theorem~\ref{thm: representation of equicont sys}).
This fact requires an extra passage through the Ellis semigroup $E(\T)$ of $\T$
(see, in particular, the proof of equation \eqref{claim:invariance}).

\begin{thm}\label{thm: equivalence semicocylce extension and almost automorphic system}
Let $(X,G)$ and $(\T,G)$ be topological dynamical systems and assume $(\T,G)$ is minimal and equicontinuous.
The following statements are equivalent.
\begin{enumerate}
\item\label{cond:a} $(X,G)$ is an almost automorphic extension of $(\T,G)$.
\item\label{cond:b} $(X,G)$ is topologically isomorphic to a semicocycle extension $(X_f,G)$ of $(\T,G)$.
\end{enumerate}
\end{thm}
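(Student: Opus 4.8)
The plan is to prove the two implications separately, with the forward direction \eqref{cond:a}$\Rightarrow$\eqref{cond:b} being the constructive one and the converse carrying the technical weight. Throughout I use that $(\T,G)$ is minimal equicontinuous so that the Ellis semigroup $E(\T)$ is a compact topological group acting transitively on $\T$ by homeomorphisms (Theorem~\ref{thm: representation of equicont sys}).

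For \eqref{cond:a}$\Rightarrow$\eqref{cond:b} I would take $K=X$ and build an explicit semicocycle. Let $\pi\colon X\to\T$ be the almost one-to-one factor map onto the maximal equicontinuous factor, pick an almost automorphic point $x_0\in X_0$ (which exists since $X_0$ is dense), and set $\theta_0=\pi(x_0)$. Define $f\colon G\theta_0\to X$ by $f(g\theta_0)=gx_0$. This is well defined, because $g\theta_0=g'\theta_0$ forces $g^{-1}g'x_0\in\pi^{-1}(\theta_0)=\{x_0\}$; here the singleton fibre over $\theta_0$ is crucial. Continuity of $f$ on $G\theta_0$ follows by compactness: any subnet limit $y$ of $(g_nx_0)$ satisfies $\pi(y)=\lim g_n\theta_0=g\theta_0$, and $\pi^{-1}(g\theta_0)=g\pi^{-1}(\theta_0)=\{gx_0\}$ forces $y=gx_0$. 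Identifying $f$ with $(gx_0)_{g\in G}\in X^G$, the equivariant embedding $j\colon X\to X^G$, $j(x)(g)=gx$, sends $x_0$ to $f$ and the orbit closure $\overline{Gx_0}=X$ isomorphically onto $X_f$. To see that $f$ is invariant under no rotation I would verify $F(\theta)=\pi^{-1}(\theta)$ for every $\theta$ (the inclusion $\subseteq$ is the closed graph, $\supseteq$ uses density of $Gx_0$); since distinct fibres are disjoint, $F(\xi\theta_1)=F(\xi\theta_2)$ for all $\xi\in E(\T)$ already forces $\theta_1=\theta_2$ upon taking $\xi=e$.

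For \eqref{cond:b}$\Rightarrow$\eqref{cond:a} I would work with the compatibility relation
\[
 R=\{(x,\theta)\in X_f\times\T : x(h)\in F(h\theta)\text{ for all }h\in G\},
\]
which is closed and $G$-invariant. Every $x\in X_f$ is compatible with some $\theta$: writing $x=\lim\sigma^{g_n}f$ and passing to a subnet with $g_n\theta_0\to\theta$, closedness of $F$ gives $x(h)=\lim f(hg_n\theta_0)\in F(h\theta)$. Thus $R$ projects onto $X_f$, and onto $\T$ by minimality. Granting that $\mathrm{pr}_1|_R$ is injective, $R$ is the graph of a map $\pi\colon X_f\to\T$ which is automatically a continuous equivariant surjection (a continuous bijection $R\to X_f$ of compact Hausdorff spaces is a homeomorphism). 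I would then note that over $G\theta_0$ the fibres are singletons: if $x$ is compatible with $g\theta_0$ then $x(h)\in F(hg\theta_0)=\{f(hg\theta_0)\}$ forces $x=\sigma^gf$, so $\pi^{-1}(g\theta_0)=\{\sigma^gf\}$. Hence $Gf\subseteq X_0$ is dense, giving almost one-to-oneness, while $\pi^{-1}(\theta_0)=\{f\}$ yields minimality (every orbit closure surjects onto $\T$, hence contains $f$). Proposition~\ref{prop:almost_automorphic} then completes the implication.

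The main obstacle is precisely the injectivity of $\mathrm{pr}_1|_R$: if $x(h)\in F(h\theta)\cap F(h\theta')$ for all $h\in G$, one must deduce $\theta\sim\theta'$ and then invoke invariance under no rotation to conclude $\theta=\theta'$. This is where the passage through $E(\T)$ is unavoidable, since in the non-abelian case there is no translation of $\T$ carrying $\theta$ to $\theta'$. I would consider $Z=\{(\xi\theta,\xi\theta'):\xi\in E(\T)\}=\overline{G(\theta,\theta')}$, a minimal equicontinuous system (a factor of $E(\T)$) whose coordinate projections onto $\T$ are \emph{open}. On the residual set $Z^{\circ}\subseteq Z$ where both coordinates avoid $D_f$, a compatible point $\hat x$ (obtained by shifting $x$ along $g_\alpha\to\xi$ and taking a limit) has $\hat x(e)$ lying in both singletons $F(\psi)=\{f(\psi)\}$ and $F(\psi')=\{f(\psi')\}$, whence $f(\psi)=f(\psi')$ throughout $Z^{\circ}$. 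Upgrading this to the full equality $F(\psi)=F(\psi')$ on all of $Z$, i.e. $F(\xi\theta)=F(\xi\theta')$ for every $\xi$, is the delicate step: I would realise an arbitrary $k\in F(\psi)$ as a limit $f(\rho_\gamma)\to k$ along continuity points $\rho_\gamma\to\psi$ (Proposition~\ref{prop:graphs}~(2)), lift these into $Z^{\circ}$ by openness of the projection, and transfer $k$ to $F(\psi')$ using the equality already established on $Z^{\circ}$ together with closedness of $F$. I expect this interplay of Baire category (the residual $Z^{\circ}$), upper semicontinuity of the sections $\theta\mapsto F(\theta)$, and the group structure of $E(\T)$ to be the technical heart of the argument.
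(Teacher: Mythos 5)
Your forward direction \eqref{cond:a}$\Rightarrow$\eqref{cond:b} is correct and essentially the paper's construction ($f(g\theta_0)=gx_0$, conjugacy via $x\mapsto(gx)_{g\in G}$); your argument for invariance under no rotation via the identity $F(\theta)=\pi^{-1}(\theta)$ and disjointness of fibres is in fact a pleasant shortcut compared to the paper, which instead routes through Lemma~\ref{lem: ensure invariance under no rotation} and coalescence of equicontinuous systems. Your backward direction is also organised differently (via the compatibility relation $R$ rather than a directly defined map $\pi$), which is fine as a skeleton: compatibility of every $x$ with some $\theta$, singleton fibres over $G\theta_0$, minimality, and continuity of $\pi$ all check out \emph{once} injectivity of $\mathrm{pr}_1|_R$ is known.

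That injectivity step, however, is exactly where your proposal has a genuine gap, and the route you sketch does not close it. You want to show that if $x(h)\in F(h\theta)\cap F(h\theta')$ for all $h$, then $F(\xi\theta)=F(\xi\theta')$ for all $\xi\in E(\T)$, and you propose to first get $f(\psi)=f(\psi')$ on the set $Z^{\circ}\subseteq Z=\overline{G(\theta,\theta')}$ where both coordinates avoid $D_f$, and then upgrade by a Baire-category/openness argument. But $Z^{\circ}$ need not be dense in $Z$ --- it can even be empty. The set $\T\setminus D_f$ is dense but in general not residual: taking $K=\{0,1\}^{I}$ with $I$ uncountable, one can arrange a legitimate semicocycle (continuous along $G\theta_0$) whose discontinuity set is all of $\T\setminus G\theta_0$; then for $\theta,\theta'$ in distinct orbits of an abelian example, $\{\xi:\xi\theta\notin D_f\}$ and $\{\xi:\xi\theta'\notin D_f\}$ are two disjoint dense sets and $Z^{\circ}=\emptyset$. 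Residuality of $\T\setminus D_f$ only comes for free when $K$ is metrizable, whereas the theorem is stated for arbitrary compact Hausdorff $K$; and even then your ``upgrading'' step needs more than openness of the projection, since you must simultaneously keep $f(\rho_\gamma)\to k$ (which is fragile near the discontinuity $\psi$) and force the second coordinate off $D_f$. The paper avoids all of this: it lifts $F$ to $\hat F=(p\times\idn_K)^{-1}(F)\subseteq E(\T)\times K$, observes via Proposition~\ref{prop:graphs}~(2) that the set $\{(g\hat\theta^1,x(g)):g\in G\}$ is dense in $\hat F$ (because $x(g)\in F(g\theta^1)$ and $G\theta^1$ is dense), and then uses cancellation in the group $E(\T)$: a net with $g_n\hat\theta^1\to\xi_0\hat\theta^1$ satisfies $g_n\to\xi_0$, hence $g_n\hat\theta^2\to\xi_0\hat\theta^2$, and closedness of $\hat F$ transports any $y\in F(\xi_0\theta^1)$ into $F(\xi_0\theta^2)$. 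You should replace your category argument by this one; your compatibility data $x(h)\in F(h\theta)$ is precisely the selection needed to apply Proposition~\ref{prop:graphs}~(2).
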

\begin{rem}
 Note that due to Proposition~\ref{prop:almost_automorphic}, the above statement yields that
 $(\T,G)$ is the maximal equicontinuous factor of any of its semicocycle extensions.
\end{rem}

\begin{proof}[{Proof of Theorem~\ref{thm: equivalence semicocylce extension and almost automorphic system}}]
We first show that $\eqref{cond:b}$ implies \eqref{cond:a}. 
Let $f$ be a semicocycle over the pointed minimal equicontinuous system $(\T,G,\theta_0)$ (where $\theta_0 \in \T$)
and assume $f$ is
invariant under no rotation.
Note that by Proposition~\ref{prop:almost_automorphic} it suffices to show that $(X_f,G)$ is
an almost one-to-one extension of $(\T,G)$.

Take $x=(x(g))_{g\in G}\in X_f$. 
By definition, there is a net $(h_n)$ in $G$ with $\lim_n f(gh_n\theta_0)=x(g)$ for each $g\in G$.
It is natural to take some accumulation point $\theta_x\in\T$ of the net $(h_n\theta_0)$ 
and define
\begin{align}\label{eq: defn of factor pi}
 \pi\: X_f \to \T, \qquad x\mapsto \theta_x.
\end{align}
Observe that for every $g\in G$ we necessarily have
\begin{align}\label{eq: coordinate entries lie in sections}
    x(g)\in F(g\theta_x)=F(g\pi(x)),
\end{align}
where $F=\overline{\gr f}\subset \T\times K$ as above.
Our goal is to prove that $\pi$ is, in fact, an almost one-to-one factor map from $X_f$ to $\T$.

To that end, we first show that $\pi$ is uniquely defined.
Note that this will yield that $\pi$ is continuous. 
Take two accumulation points $\theta^1_x,\theta^2_x\in\T$ of the net $(h_n\theta_0)$. 
We will prove that
\begin{equation}
\label{claim:invariance}
F(\xi\theta^1_x)=F(\xi\theta^2_x) \quad\text{ for every }\xi\in E(\T).
\end{equation}
Since $f$ is invariant under no rotation, this yields that, in fact, $\theta^1_x=\theta^2_x$.

To show \eqref{claim:invariance}, we introduce some notation.
Let $p\colon E(\T)\to\T$ be a factor map as given by Theorem~\ref{thm: representation of equicont sys}.
Fix $\hat\theta^1_x,\hat\theta^2_x\in E(\T)$ with $p(\hat\theta^1_x)=\theta^1_x$ and 
$p(\hat\theta^2_x)=\theta^2_x$. 
Further, let 
\[\hat{F}=(p\times\idn_K)^{-1}(F)=\{(\xi,y)\in E(\T)\times K: (p(\xi),y)\in F\}.\]
Clearly, if $p(\xi)=p(\xi')$ for $\xi,\xi'\in E(\T)$, then $\hat F(\xi)=\hat F(\xi')$.

Since $p$ is an open map, the set $E_0=p^{-1}(G\theta_0)$ is dense in $E(\T)$.
Similarly, as $\hat f=f\circ p\colon E_0\to K$ verifies
$\gr\hat f=(p\times\idn_K)^{-1}(\gr f)$, we have that $\gr\hat f$ is dense in $\hat F$.
Observe that $\hat f$ is continuous.
Now, Proposition~\ref{prop:graphs}~(2) and equation \eqref{eq: coordinate entries lie in sections} yield that
$\Theta^1=\{(g\theta^1_x,x(g))\in\T\times K:g\in G\}$ is dense in $F$ and that
$\hat\Theta^1=\{(g\hat\theta^1_x,x(g))\in E(\T)\times K:g\in G\}$ is dense in $\hat{F}$.
We are ready to prove \eqref{claim:invariance}.

Fix $\xi_0\in E(\T)$ and take $y\in F(\xi_0\theta^1_x)$.
Clearly, $p(\xi_0\hat \theta^1_x)=\xi_0 p(\hat \theta^1_x)=\xi_0 \theta^1_x$ so that $(\xi_0\hat\theta^1_x,y)\in \hat F$. 
Due to the denseness of $\hat\Theta^1$ in $\hat F$, there is a net $(g_n)$ such that
$\lim_n (g_n\hat\theta^1_x,x(g_n))=(\xi_0\hat\theta^1_x,y)$, 
in particular, $\lim_n g_n\hat\theta^1_x=\xi_0\hat\theta^1_x$ so that $\lim_n g_n=\xi_0$ since $E(\T)$ is a group.
It follows that $\lim_n (g_n\hat\theta^2_x,x(g_n))=(\xi_0\hat\theta^2_x,y)$. 
Therefore, $y\in \hat F(\xi_0\hat\theta^2_x)=F(\xi_0p(\hat\theta^2_x))=F(\xi_0\theta^2_x)$. 
This proves that $F(\xi_0\theta^1_x)\ssq F(\xi_0\theta^2_x)$. 
Interchanging the roles of $\theta^1_x$ and  $\theta^2_x$, we obtain the reverse inclusion and hence
\eqref{claim:invariance}.

The map $\pi$ in \eqref{eq: defn of factor pi} is thus uniquely defined and continuous.
Further, we clearly have
 \[\pi(\sigma^s (f(gh\theta_0))_{g\in G})=\pi((f(gsh\theta_0))_{g\in G})=sh\theta_0=s \pi((f(gh\theta_0))_{g\in G})\]
for all $s,h\in G$.
Hence, by continuity of $\pi$ and denseness of $\{\sigma^hf(\cdot\,\theta_0)\: h\in G\}$ in $X_f$, we have $\pi(\sigma^sx)=s \pi(x)$
for all $x\in X_f$.
Thus, $(\T,G)$ is a factor of $(X_f,G)$.
Further, note that \eqref{eq: coordinate entries lie in sections} implies that $\pi$ is almost one-to-one which
hence proves (2).

Finally, we prove that \eqref{cond:a} implies \eqref{cond:b}. 
To this end, suppose we have an almost automorphic system $(K,G)$ with the maximal equicontinuous factor $(\T,G)$ and 
let $\pi\colon K\to\T$ be the associated factor map. 
Take an almost automorphic point $k\in K$ and define $f$ on $G\pi(k)\ssq \T$ by
$f(g\pi(k))= gk\in K$. 
Since $k$ is an almost automorphic point, it is not hard to see that $f$ is a continuous map.
Thus, $f$ is a $K$-valued semicocycle over $(\T,G,\pi(k))$. Let $X_f=\overline{\{f(gh\pi(k))_{g\in G}\:h\in G\}}\ssq K^G$. 
It remains to show that $(X_f,G)$ is isomorphic to $(X ,G)$ and that $f$ is invariant under no rotation, that is $\sim$ is a trivial equivalence relation on $\T$.
We begin by noting that the map $\psi\colon X_f\ni x\mapsto x(e)\in K$, where $e=e_{G}$ is the neutral element of $G$,  is onto and continuous.
To show that $\psi$ is injective, take $g,h\in G$ and observe that $f(gh\pi(k))=ghk=gf(eh\pi(k))$. 
This implies $x_{g}=gx_e$ for each $(x_g)_{g\in G}\in X_f$.
Thus, if $x_e=y_e$, then $(x_g)_{g\in G}=(y_g)_{g\in G}$, proving injectivity.
 Moreover, it is easy to see that $\psi(\sigma^g x)=g\psi(x)$ for every $g\in G$.
 Hence, $(X_f,G)$ is isomorphic to $(K,G)$ so that $(\T,G)$ is a maximal equicontinuous factor of $(X_f,G)$.
 At the same time, Lemma~\ref{lem: ensure invariance under no rotation} and
 the fact that \eqref{cond:b} implies \eqref{cond:a} yield that $(X_f,G)$ is an almost
 automorphic extension of $(\Tsim,G)$ which is --due to Proposition~\ref{prop:almost_automorphic}-- 
 a maximal equicontinuous factor of $(X_f,G)$, too.
 Therefore, we may consider the factor map $\T\ni \xi\to [\xi]\in\Tsim$ as an endomorphism of $(\T,G)$. 
 Due to the coalescence of equicontinuous systems,
 the factor map $\T\ni \xi\to [\xi]\in\Tsim$ must be a homeomorphism, so that $\sim$ is the identity relation.
 Thus, $(X_f,G)$ is a semicocycle extension of $(\T,G)$ isomorphic to $(K,G)$.
\end{proof}

\begin{rem}
 Observe that a semicocycle extension $(X_f,G)$ of $(\T,G)$ is regular
 if and only if $GD_f$ is measurable and $m_\T(GD_f)=0$.
\end{rem}

\subsection{A brief discussion of a weak form of freeness.}
In order to obtain conditions which ensure that a semicocycle extension $(X_f,G)$ is tame,
we will have to assume a certain form of freeness of its maximal equicontinuous factor $(\T,G)$
(see Lemma~\ref{lem: criterion for tameness}).
This section provides a simple auxiliary statement which will prove useful in this context.

Recall that $G$ is said to act \emph{almost freely} on $\theta\in \T$ if 
there are only finitely many $g\in G$ with $g\theta=\theta$.
In other words, $G$ acts almost freely on $\theta$ if $\Stab_G(\theta)=\{g\in G\: g\theta=\theta\}$ is finite.
To weaken the notion of freeness even further, let us introduce the following relation.
Given $\theta\in \T$ and $g,g'\in\Stab_G(\theta)$, we write
\[
 g\stackrel{\theta}{\sim}g' \quad \Leftrightarrow \quad \text{there is a neighbourhood } U \text{ of } \theta 
 \text{ such that for all } \w \in U \text{ we have } 
 g \w =g'\w.
\]
Clearly, $\stackrel{\theta}{\sim}$ defines an equivalence relation on $\Stab_G(\theta)$.

\begin{defn}
 We say that $G$ acts \emph{locally almost freely} on $\theta\in \T$ if
 the quotient of $\Stab_G(\theta)$ with respect to $\stackrel{\theta}{\sim}$ is finite.
\end{defn}

It is immediate that $G$ acts locally almost freely on $\theta$ if it acts almost freely on $\theta$.
An example of an effective minimal equicontinuous system $(\T,G)$ where $G$ acts locally almost freely on every $\theta$
but not almost freely on any $\theta$ is given by an action of the isometric subgroup of the topological full group
of a $\Z$-odometer (see \cite[Example~7.3]{FuhrmannGrogerLenz2018}).

The following observation will be applied in the proof of Lemma~\ref{lem: criterion for tameness}.
\begin{prop}\label{prop: loacally almost free}
 Let $(X_f,G)$ be a semicocycle extension of the minimal equicontinuous system $(\T,G)$ and let
 $\pi$ be the corresponding factor map from $(X_f,G)$ to $(\T,G)$.
 Let $\theta\in \T$, $x\in \pi^{-1}(\theta)$, and $g,g_1,g_2\in G$ with $g_1\stackrel{\theta}{\sim} g_2$.
 Then
 \[
  x(gg_1)=x(gg_2).
  \]
\end{prop}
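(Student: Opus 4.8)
The plan is to recall the concrete description of the fibre elements of a semicocycle extension and then exploit the local agreement of $g_1$ and $g_2$ near $\theta$ together with a density/continuity argument. First I would recall from the proof of Theorem~\ref{thm: equivalence semicocylce extension and almost automorphic system} that every $x\in X_f$ arises as a limit $x(g)=\lim_n f(gh_n\theta_0)$ along some net $(h_n)$ in $G$, and that the associated base point satisfies $\pi(x)=\theta_x=\lim_n h_n\theta_0$ (after passing to a subnet). Since $x\in\pi^{-1}(\theta)$, we have $\theta_x=\theta$, so $h_n\theta_0\to\theta$.

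The key observation is that the coordinates $x(gg_1)$ and $x(gg_2)$ are computed by evaluating $f$ along the two nets $(gg_1h_n\theta_0)$ and $(gg_2h_n\theta_0)$. Since $g_1\stackrel{\theta}{\sim}g_2$, there is a neighbourhood $U$ of $\theta$ on which $g_1$ and $g_2$ act identically, i.e. $g_1\w=g_2\w$ for all $\w\in U$. Because $h_n\theta_0\to\theta$, we have $h_n\theta_0\in U$ for $n$ large, whence $g_1h_n\theta_0=g_2h_n\theta_0$ eventually. Applying the homeomorphism $g$ (which is continuous and, being a $g$-translation, commutes in the obvious way) gives $gg_1h_n\theta_0=gg_2h_n\theta_0$ for large $n$. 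Evaluating $f$ at these equal points and passing to the limit yields
\[
 x(gg_1)=\lim_n f(gg_1h_n\theta_0)=\lim_n f(gg_2h_n\theta_0)=x(gg_2),
\]
as desired.

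The main technical point to handle carefully is the justification that $x(g')=\lim_n f(g'h_n\theta_0)$ holds simultaneously for $g'=gg_1$ and $g'=gg_2$ along the \emph{same} net $(h_n)$. This is precisely how $x\in X_f$ was produced in the proof of Theorem~\ref{thm: equivalence semicocylce extension and almost automorphic system}: the defining net gives convergence of $f(g'h_n\theta_0)$ to $x(g')$ for \emph{every} fixed $g'\in G$ at once (convergence in $K^G$ is coordinatewise). Thus no incompatibility between the two coordinates arises, and once we know $g_1h_n\theta_0=g_2h_n\theta_0$ eventually the two coordinate limits must coincide. I would note that the argument uses only that $g_1$ and $g_2$ agree on a neighbourhood of $\theta$ (not on all of $\T$), which is exactly the content of local almost freeness and the relation $\stackrel{\theta}{\sim}$; the finiteness of the quotient in the definition plays no role here and will only be needed later in Lemma~\ref{lem: criterion for tameness}.
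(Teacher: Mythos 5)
Your argument is correct and is essentially identical to the paper's proof: both express $x(g')=\lim_n f(g'h_n\theta_0)$ along a single net with $h_n\theta_0\to\theta$, and use that $g_1$ and $g_2$ agree on a neighbourhood of $\theta$ to conclude $gg_1h_n\theta_0=gg_2h_n\theta_0$ eventually. Your write-up merely spells out the details (coordinatewise convergence in $K^G$, eventual membership of $h_n\theta_0$ in $U$) that the paper leaves implicit.
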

\begin{proof}
 We keep the notation as in the proof of Theorem~\ref{thm: equivalence semicocylce extension and almost automorphic system}.
 Let $(h_n)$ be a net in $\T$ such that $x(g)=\lim_n f(g  h_n \theta_0)$ for each $g\in G$.
 Since $g_1$ and $g_2$ coincide on a neighbourhood of $\theta$, we obtain
 \begin{align*}
  x(g g_1)=\lim_n f(g g_1  h_n \theta_0)=\lim_n f(g g_2  h_n \theta_0)=x(g g_2),
 \end{align*}
 where we used $\theta =\lim_n h_n\theta_0$ (see equation \eqref{eq: defn of factor pi}) in the second equality.
\end{proof}

\section{Regular non-tame and tame non-null examples}\label{seq: examples}
Throughout this section, we assume $\T$ to be an infinite compact metric space equipped with a metric $\rho$
with respect to which the group $G$ acts isometrically on $\T$.
As before, we assume $(\T,G)$ to be minimal.

After a short discussion of (weak forms of) topological independence,
we will turn to the construction of tame non-null and non-tame semicocycle extensions of $(\T,G)$.
In the last section, we provide several symbolic examples.

\subsection{Tameness and nullness}
In the following, we briefly 
discuss the concepts of tameness and nullness.
For the sake of a concise presentation and later applications, this discussion is held
in the framework of semicocycle extensions $(X_f,G)$.

Given subsets $A_0,A_1 \ssq X_f$, we say that $J\ssq G$ is an \emph{independence set} for $(A_0,A_1)$ if
for each finite subset $I\ssq J$ and every $a\in \{0,1\}^I$ we have
$\bigcap_{i\in I}\sigma^{i^{-1}}A_{a_i}\neq \emptyset$.
A pair of points $x_0,x_1 \in X_f$ is an \emph{IT-pair} if
for each pair of neighbourhoods $U_0$ and $U_1$ of $x_0$ and $x_1$, respectively,
there is an infinite independence set.
Instead of providing the original definition of tameness, we make use of the following alternative characterisation
(see \cite[Proposition~6.4]{KerrLi2007}).
We say $(X_f,G)$ is \emph{non-tame} if there is an IT-pair $(x_0,x_1)$
with $x_0\neq x_1$.
Note that the existence of such an IT-pair implies the existence
of distinct $k_0,k_1\in K$ such that for all compact neighbourhoods $V_0\ssq K$ and $V_1\ssq K$ of $k_0$ and $k_1$, respectively,
there is an infinite independence set $J$ for $(A(V_0), A(V_1))$,
where
\[
	A(V_j)=\{x\in X_f\: x_e\in V_j\} \qquad (j=0,1).
\]

Vice versa, the existence of disjoint compact subsets $A_0,A_1\in X_f$ with an infinite independence set
implies non-tameness \cite[Proposition~6.4]{KerrLi2007}.
This immediately yields the following
\begin{prop}\label{prop: non-tameness iff}
 The system $(X_f,G)$ is non-tame if and only if there are disjoint compact sets $V_0,\, V_1\ssq K$
 and a sequence $(g_i)_{i\in \N}$ in $G$ such that for every $\ell\in \N$ and each
 $a\in \{0,1\}^\ell$ there is
 $x\in X_f$ with $x_{g_i}\in V_{a_i}$ for $i=1,\ldots,\ell$ or, equivalently, such that for each
 $a\in \{0,1\}^\N$ there is
 $x\in X_f$ with $x_{g_i}\in V_{a_i}$.
\end{prop}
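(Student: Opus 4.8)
The plan is to obtain both implications essentially for free from the two facts recorded just before the statement, namely that a nontrivial IT-pair yields distinct $k_0,k_1\in K$ for which \emph{every} pair of compact neighbourhoods $V_0\ni k_0$, $V_1\ni k_1$ admits an infinite independence set for $(A(V_0),A(V_1))$, and conversely that disjoint compact subsets of $X_f$ carrying an infinite independence set force non-tameness \cite[Proposition~6.4]{KerrLi2007}. The only genuine work is to rewrite the abstract independence condition $\bigcap_{i\in I}\sigma^{i^{-1}}A_{a_i}\neq\emptyset$ as the coordinatewise statement in the proposition.

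The first step is to record the relevant bookkeeping identity. Since $(\sigma^h x)_g=x_{gh}$ and $\sigma^{h}\sigma^{h'}=\sigma^{hh'}$, for $g_i\in G$ a point $y$ lies in $\sigma^{g_i^{-1}}A(V)$ exactly when $\sigma^{g_i}y\in A(V)$, i.e. when $(\sigma^{g_i}y)_e=y_{g_i}\in V$. Hence, enumerating a countable index set and writing $(g_i)_{i\in\N}$ for the corresponding group elements, for any finite $I\ssq\N$ and any $a\in\{0,1\}^I$ the intersection $\bigcap_{i\in I}\sigma^{g_i^{-1}}A(V_{a_i})$ is nonempty if and only if there is $x\in X_f$ with $x_{g_i}\in V_{a_i}$ for all $i\in I$. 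This single equivalence drives both directions.

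For the forward implication I would start from a nontrivial IT-pair, apply the quoted note to get distinct $k_0,k_1$, and use that $K$ is compact Hausdorff (hence regular) to pick \emph{disjoint} compact neighbourhoods $V_0\ni k_0$ and $V_1\ni k_1$. The associated infinite independence set $J$ contains a countably infinite subset, which I enumerate as $(g_i)$; the identity above then turns the independence property into the required statement, and taking $I=\{1,\dots,\ell\}$ gives the finite-prefix form. For the converse I would put $A_j=A(V_j)$. These sets are compact (each is $X_f$ intersected with the preimage of the closed set $V_j$ under the continuous evaluation $x\mapsto x_e$) and disjoint (since $V_0\cap V_1=\emptyset$), and by the identity the hypothesis says precisely that $\{g_i\}$ is an independence set for $(A_0,A_1)$. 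Here the one point needing care is that this index set is actually infinite: if $g_i=g_j$ for some $i\neq j$, then choosing $a$ with $a_i=0$ and $a_j=1$ would demand a point with $x_{g_i}\in V_0\cap V_1=\emptyset$, so the $g_i$ are pairwise distinct. The cited result then yields non-tameness.

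Finally, the ``equivalently'' clause is a routine compactness argument: the sets $C_i^a=\{x\in X_f:x_{g_i}\in V_{a_i}\}$ are closed in the compact space $X_f$, and the finite-prefix statement says that every finite subfamily of $\{C_i^a:i\in\N\}$ has nonempty intersection, so the finite intersection property produces a single $x\in\bigcap_{i\in\N}C_i^a$, which is the $\{0,1\}^\N$-form; the reverse implication is trivial. I do not anticipate a serious obstacle, since the substance of the statement is carried by the two cited facts; the only places demanding attention are fixing the handedness of the shift in the coordinate identity and checking that the independence set produced in the converse is infinite.
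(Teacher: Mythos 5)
Your proposal is correct and follows essentially the same route as the paper, which states the proposition as an immediate consequence of the two facts recorded just before it (the IT-pair characterisation of non-tameness and \cite[Proposition~6.4]{KerrLi2007}) and gives no further argument. You have merely made explicit the routine details the paper leaves implicit --- the coordinate identity $y\in\sigma^{g^{-1}}A(V)\Leftrightarrow y_g\in V$, the choice of disjoint compact neighbourhoods in the compact Hausdorff space $K$, the pairwise distinctness of the $g_i$, and the finite-intersection-property argument for the ``equivalently'' clause --- all of which check out.
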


The following observation will be key in the construction of tame examples.

\begin{lem}\label{lem: criterion for tameness}
 Let $(X_f,G)$ be a semicocycle extension of the minimal equicontinuous metric dynamical system $(\T,G)$.
 Suppose $D_f$ is countable and suppose that for each $\theta\in \T$ we have that $G\theta \cap D_f$ is finite and
 that $G$ acts locally almost freely on every $\theta\in D_f$.
  Then $(X_f,G)$ is tame.
\end{lem}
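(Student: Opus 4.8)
The plan is to argue by contraposition, using the independence–theoretic description of tameness together with the finite–branching structure of the fibres of $\pi\colon X_f\to\T$. Suppose $(X_f,G)$ is non-tame, so that it admits an IT-pair $(x_0,x_1)$ with $x_0\neq x_1$. Since the image of an IT-pair under a factor map is again an IT-pair, $(\pi(x_0),\pi(x_1))$ is an IT-pair of the maximal equicontinuous factor $(\T,G)$; as $(\T,G)$ is equicontinuous, hence tame, it has no non-diagonal IT-pairs, and therefore $\pi(x_0)=\pi(x_1)=:\theta^*$. Thus $x_0,x_1$ lie in a single $\pi$-fibre and differ in some coordinate; replacing $(x_0,x_1)$ by $(\sigma^g x_0,\sigma^g x_1)$ for a suitable $g$ (again an IT-pair, now over $g\theta^*$, which I rename $\theta^*$) I may assume they differ at $e$. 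Hence $F(\theta^*)$ contains two distinct points $k_0=x_0(e)$ and $k_1=x_1(e)$, so $\theta^*\in D_f$, and I fix disjoint compact neighbourhoods $V_0\ni k_0$ and $V_1\ni k_1$ in $K$.

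The structural heart is a uniform finiteness of branching inside each fibre. For fixed $\theta\in\T$ and $x\in\pi^{-1}(\theta)$, the value $x(g)$ is uniquely determined whenever $g\theta\notin D_f$, since $F(g\theta)$ is then a singleton (Proposition~\ref{prop:graphs}). The remaining coordinates $\{g:g\theta\in D_f\}$ split into the cosets $\{g:g\theta=\eta\}$ indexed by $\eta\in G\theta\cap D_f$, a finite set by hypothesis; and within one such coset Proposition~\ref{prop: loacally almost free} shows that $x(g)$ depends only on the $\stackrel{\theta}{\sim}$-class of the corresponding element of $\Stab_G(\theta)$. Conjugating $\Stab_G(\theta)$ by any $g_0$ with $g_0\theta=\eta\in D_f$ identifies $\Stab_G(\theta)/\!\stackrel{\theta}{\sim}$ with $\Stab_G(\eta)/\!\stackrel{\eta}{\sim}$ (as $g_0$ carries a neighbourhood of $\theta$ to one of $\eta$), which is finite by local almost freeness on $D_f$. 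Consequently the coordinates on which two points of $\pi^{-1}(\theta)$ can disagree are covered by $N(\theta):=\#(G\theta\cap D_f)\cdot\#(\Stab_G(\theta)/\!\stackrel{\theta}{\sim})<\infty$ classes, on each of which all fibre points agree; in particular, the disjoint sets $V_0,V_1$ can separate at most $N(\theta)$ coordinates inside a single fibre. Note this is exactly where both the finiteness of $G\theta\cap D_f$ and local almost freeness (needed only on $D_f$, then transported by conjugation) enter.

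It remains to concentrate the independence carried by the IT-pair into one fibre. For each pair of neighbourhoods $U_0\ni x_0$, $U_1\ni x_1$ there is an infinite independence set, and a realiser $z$ of a finite pattern $a$ satisfies $\sigma^{g_i}z\in U_{a_i}$, whence $z_{g_i}\in V_{a_i}$ and $g_i\pi(z)\in\pi(U_{a_i})$, a set that shrinks to $\{\theta^*\}$ as $U_0,U_1\downarrow\{x_0\},\{x_1\}$. Shrinking the neighbourhoods and passing to subnets, I would take limits simultaneously in the compact group $E(\T)$ (so that $g_i\to\xi_i$) and in the compact space $X_f$ (so that the realisers converge, $z^a\to\zeta^a$). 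Joint continuity of the $E(\T)$-action on $\T$ (Theorem~\ref{thm: representation of equicont sys}) then yields $\xi_i\,\pi(\zeta^a)=\theta^*$ for every $i$, so all $\zeta^a$ lie over the single point $\beta^\dagger:=\xi_1^{-1}\theta^*$; transporting the separation through the Ellis homomorphism $\Phi\colon E(X_f)\to E(\T)$ of Theorem~\ref{thm: ellis groups are homomorphic} (with $\Phi(\eta_i)=\xi_i$ for $\eta_i=\lim\sigma^{g_i}$) keeps the family $\{\zeta^a\}$ separated by $V_0,V_1$ at the appropriate coordinates. Thus $\pi^{-1}(\beta^\dagger)$ separates arbitrarily many coordinates, contradicting the bound $N(\beta^\dagger)<\infty$; hence no non-diagonal IT-pair exists and $(X_f,G)$ is tame.

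The main obstacle is precisely this final limiting step. The realisers of the independence set genuinely roam over different base points, and the shift maps $\sigma^{g_i}$ are not equicontinuous, so the Ellis action $E(X_f)\times X_f\to X_f$ is only separately continuous; hence real care is needed to guarantee that the simultaneous (iterated) limits place every $\zeta^a$ in one and the same fibre while preserving the $V_0/V_1$-pattern at a fixed finite set of coordinates. For general, possibly uncountable $G$ the space $X_f$ need not be metrizable, so the argument must be run with nets, using the Ellis semigroup as a well-defined repository for these iterated limits; this is exactly where the passage through $E(\T)$ and $\Phi$ is indispensable. Everything else—the finite-branching bound and the reduction to a single fibre over a point of $D_f$—is comparatively routine given Proposition~\ref{prop: loacally almost free} and the standing hypotheses.
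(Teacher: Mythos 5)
Your reduction of the IT-pair to a single fibre over a point $\theta^*\in D_f$, and the fibre-wise finite-branching bound obtained from Proposition~\ref{prop: loacally almost free} together with the conjugation of stabilisers, are both correct and coincide with the second half of the paper's argument. The genuine gap is the ``concentration'' step, and it is not a matter of mere care: as set up, it does not close. The infinite independence set changes as $U_0,U_1$ shrink, so the coordinates $g_i^U$ and the realisers $z^a_U$ vary along the same net; the identity $\lim_U \sigma^{g_i^U}z^a_U=\eta_i\zeta^a$ that you would need fails because the action of $E(X_f)$ on $X_f$ is only separately continuous, and no equicontinuity of the maps $\sigma^{g_i^U}$ is available to repair the interchange of iterated limits. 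Even granting the limits, the coordinates at which the limit points $\zeta^a$ would be separated are elements $\eta_i\in E(X_f)$ rather than elements of $G$, whereas your bound $N(\beta^\dagger)$ only controls coordinates $g\in G$ (Proposition~\ref{prop: loacally almost free} is a statement about group elements, and no analogue for Ellis elements is established). A telling symptom is that your argument never invokes the hypothesis that $D_f$ is countable.

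That hypothesis is exactly what drives the paper's proof, which replaces your limiting step by a cardinality count. Starting from Proposition~\ref{prop: non-tameness iff}, one fixes a single sequence $(g_i)$, disjoint compact sets $V_0,V_1$, and (after passing to a subnet) a limit $\xi=\lim_i g_i$ in $E(\T)$, such that every $a\in\{0,1\}^\N$ is realised by some $x\in X_f$ with $x_{g_i}\in V_{a_i}$. If $\xi\pi(x)\notin D_f$, then $F(\xi\pi(x))$ is a singleton and the closedness of $F$ forces the realised pattern to be eventually constant, so such $x$ account for only countably many patterns. Since $\{\theta\colon \xi\theta\in D_f\}$ is countable while $\{0,1\}^\N$ is uncountable, some single fibre $\pi^{-1}(\theta)$ with $\xi\theta\in D_f$ realises uncountably many patterns; there your finite-branching bound applies and yields the contradiction. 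If you wish to keep the IT-pair formulation, you still need some such pigeonhole to trap more than $2^{N}$ patterns over one base point; the Ellis-semigroup limits cannot substitute for it.
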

\begin{proof}
 For a contradiction, suppose we are given disjoint compact sets $V_0,\, V_1\ssq K$ and a sequence $(g_i)_{i\in \N}$ in $G$
 as in Proposition~\ref{prop: non-tameness iff}.
 Without loss of generality, we may assume that there is $\xi \in E(\T)$ with
 $g_i\to \xi$ as $i\to \infty$ (where the $g_i$ are considered as elements in $E(\T)$).

 Given $x\in X_f$, recall that $x_{g_i}\in F(g_i\pi(x))$ (see equation \eqref{eq: coordinate entries lie in sections}),
 where $\pi$ denotes the factor map from $(X_f,G)$ to $(\T,G)$. 
 Let us assume first that $\lim_{i\to\infty}g_i \pi(x)=\xi\pi(x)\notin D_f$.
 Note that in this case there is $i_0\in\N$ such that at least one of the following conditions holds
 \begin{itemize}
  \item $\forall\, i\geq i_0\: x_{g_i}\notin V_0$ (which is the case if $F(\xi\pi(x))=\{f(\xi\pi(x))\}\ssq V_0^c$) or
  \item $\forall\, i\geq i_0\: x_{g_i}\notin V_1$ (which is the case if $F(\xi\pi(x))=\{f(\xi\pi(x))\}\ssq V_1^c$),
 \end{itemize}
 where we understand $f$ to be defined on $\T\setminus D_f$ in the sense of \eqref{eq: semicocycle extended to complement of Df}.
 Hence, if there is $a\in\{0,1\}^\N$ with $x_{g_i}\in V_{a_i}$ for every $i\in \N$, then $a$ is
 necessarily eventually constant.
 Thus, the set
 \[
 \left\{a\in \{0,1\}^\N\: \text{there is } x\in X_f \text{ with } \xi\pi(x)\notin D_f \text{ and } x_{g_i}\in V_{a_i} \ (i\in \N) \right\}
 \]
 consists of eventually constant sequences and is therefore at most countable.

 Since $D_f$ is countable, we clearly have that $\{\theta\in \T\: \xi\theta\in D_f\}$ is countable, too.
 As $\{0,1\}^\N$ is uncountable, there must hence be
 $\theta \in \T$ (with $\xi\theta \in D_f$) such that
 \[
  \left\{a\in \{0,1\}^\N\: \text{there is } x\in \pi^{-1}(\theta) \text{ with } x_{g_i}\in V_{a_i} \ (i\in \N)\right\}
 \]
 is uncountable.
 Pick such $\theta \in \T$.
 
 Suppose we are given $i_0\in \N$ with $g_{i_0}\theta\notin D_f$.
 Then for all $x\in \pi^{-1}(\theta)$ we have that $x_{g_{i_0}}\in F(g_{i_0}\theta)=\{f(g_{i_0}\theta)\}$, due to \eqref{eq: coordinate entries lie in sections}.
 In other words, there is only one (if any) admissible value for the ${i_0}$-th entry of
 any sequence $a\in \{0,1\}^\N$ verifying $x_{g_i}\in V_{a_i}$ $(i\in \N)$ for some $x\in \pi^{-1}(\theta)$.
 
 We may hence assume without loss of generality that $g_i\theta \in D_f$ for all $i\in \N$.
 By our assumptions, there are finitely many $\theta_1,\ldots,\theta_n$ 
 such that $G \theta\cap D_f=\{\theta_1,\ldots,\theta_n\}$. 
 We may assume that $g_\ell\theta=\theta_\ell$ for $\ell=1,\ldots,n$.
 
 Clearly, $\Stab_G(\theta)=g_1^{-1}\Stab_G(\theta_1)g_1$ so that $G$ acts locally almost freely on
 $\theta$, too.
 Let $a_1,\ldots,a_m$ be representatives of the equivalence classes of $\stackrel{\theta}{\sim}$.
 For $i\in \N$, we set $\ell(i)\in\{1,\ldots,n\}$ such that $g_{\ell(i)}\theta=g_i\theta$ and set
 $j(i)\in\{1,\ldots,m\}$ such that $g_{\ell(i)}^{-1}g_i\stackrel{\theta}{\sim}a_{j(i)}$.
 Now, for all $i\in \N$ and all $x\in \pi^{-1}(\theta)$, we obtain $x_{g_i}=x_{g_{\ell(i)}g_{\ell(i)}^{-1}g_i}=x_{g_{\ell(i)}a_{j(i)}}$  where we used
 Proposition~\ref{prop: loacally almost free} in the last step. 
 Let $N\in \N$ be such that for all $i\in \N$ there is $k_i\leq N$ with $\ell(k_i)=\ell(i)$ and $j(k_i)=j(i)$.
 Observe that such $N$ clearly exists.
 However, this implies that every sequence $a\in \{0,1\}^\N$, which satisfies for some $x\in \pi^{-1}(\theta)$ and
 all $i\in  \N$ that $x_{g_i}\in V_{a_i}$, is completely determined by its first $N$ entries.
 That is, there are only finitely many such sequences $a$.
 This contradicts the assumptions on $\theta$ and finishes the proof.
\end{proof}

Naturally related to the idea of tameness is the concept of \emph{nullness}.
We refrain from rephrasing the original definition.
Instead, we provide the following characterisation for systems of the form $(X_f,G)$
which is obtained by similar arguments as Proposition~\ref{prop: non-tameness iff} (cf. \cite[Proposition~5.4]{KerrLi2007}).
\begin{prop}\label{prop: non-null iff}
 The system $(X_f,G)$ is non-null if and only if there are disjoint compact sets $V_0,\, V_1\ssq K$ such that for each $\ell\in \N$
 there is a finite sequence $(g_i)_{i=1,\ldots,\ell}$ in $G$ such that for each $a\in \{0,1\}^\ell$ there exists
 $x\in X_f$ with $x_{g_i}\in V_{a_i}$.
\end{prop}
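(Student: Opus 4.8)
The plan is to follow the proof of Proposition~\ref{prop: non-tameness iff} essentially verbatim, replacing everywhere the role of an \emph{infinite} independence set (which governs IT-pairs and tameness) by that of \emph{arbitrarily large finite} independence sets (which governs nullness). Concretely, I would invoke the Kerr--Li characterisation \cite[Proposition~5.4]{KerrLi2007}: writing, in analogy with the IT-pairs above, an \emph{IN-pair} for a pair $(x_0,x_1)$ such that for all neighbourhoods $U_0,U_1$ of $x_0,x_1$ the pair $(U_0,U_1)$ admits finite independence sets of arbitrarily large cardinality, the system $(X_f,G)$ is non-null if and only if it has an IN-pair $(x_0,x_1)$ with $x_0\neq x_1$, equivalently if and only if there are disjoint compact sets $A_0,A_1\ssq X_f$ for which $(A_0,A_1)$ has finite independence sets of arbitrarily large cardinality. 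The whole argument then reduces to a bookkeeping translation between independence sets of the pair $(A(V_0),A(V_1))$ and the coordinatewise membership condition in the statement.

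For the forward implication, suppose $(X_f,G)$ is non-null and let $(x_0,x_1)$ be an IN-pair with $x_0\neq x_1$. Since $x_0\neq x_1$ as elements of $K^G$, there is $g\in G$ with $x_0(g)\neq x_1(g)$; as IN-pairs are preserved by the shift action, I replace $(x_0,x_1)$ by $(\sigma^g x_0,\sigma^g x_1)$ and thereby reduce to the case in which the two points differ in their $e$-coordinate, say $(\sigma^g x_j)_e = k_j$ with $k_0\neq k_1$ in $K$. Choosing disjoint compact neighbourhoods $V_0,V_1\ssq K$ of $k_0,k_1$, the sets $A(V_j)=\{x\in X_f\: x_e\in V_j\}$ are neighbourhoods of $\sigma^g x_0,\sigma^g x_1$, the $e$-coordinate projection being continuous. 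The IN-pair property then supplies, for each $\ell\in\N$, a finite independence set $J_\ell$ for $(A(V_0),A(V_1))$ with $\#J_\ell\geq\ell$.

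It remains to unwind the definition of an independence set. Because $h\mapsto\sigma^h$ is a left action, $\sigma^{i^{-1}}A(V_j)=(\sigma^i)^{-1}A(V_j)$, and since $(\sigma^i y)_e=y_i$, one has $y\in\sigma^{i^{-1}}A(V_j)$ precisely when $y_i\in V_j$. Hence, for $I\ssq J_\ell$ and $a\in\{0,1\}^I$, the condition $\bigcap_{i\in I}\sigma^{i^{-1}}A(V_{a_i})\neq\emptyset$ is exactly the existence of some $x\in X_f$ with $x_i\in V_{a_i}$ for all $i\in I$. Enumerating the first $\ell$ elements of $J_\ell$ as $(g_i)_{i=1,\ldots,\ell}$ then yields precisely the finite sequence demanded in the statement; note that, in contrast to Proposition~\ref{prop: non-tameness iff}, the sequence is here allowed to depend on $\ell$, matching the fact that we only control arbitrarily large \emph{finite} independence sets. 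Conversely, given disjoint compact $V_0,V_1\ssq K$ and, for each $\ell$, a finite sequence realising all $a\in\{0,1\}^\ell$, the same translation shows that the disjoint compact sets $A(V_0),A(V_1)$ admit finite independence sets of every cardinality, whence $(X_f,G)$ is non-null by \cite[Proposition~5.4]{KerrLi2007}.

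I do not anticipate a serious obstacle: the argument is structurally identical to that of Proposition~\ref{prop: non-tameness iff}, and the substantive content lies entirely in the Kerr--Li dichotomy for IN-pairs. The only points requiring care are the verification that IN-pairs are invariant under the shift (so that the reduction to the $e$-coordinate is legitimate) and the correct identification $\sigma^{i^{-1}}A(V_j)=(\sigma^i)^{-1}A(V_j)$ together with $(\sigma^i y)_e=y_i$; both are the exact analogues of the facts tacitly used in the tame case and hold verbatim here.
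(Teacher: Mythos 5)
Your proposal is correct and follows exactly the route the paper intends: the paper gives no separate proof of this proposition, stating only that it is ``obtained by similar arguments as Proposition~\ref{prop: non-tameness iff} (cf.\ \cite[Proposition~5.4]{KerrLi2007})'', which is precisely your reduction to the Kerr--Li IN-pair dichotomy followed by the translation $y\in\sigma^{i^{-1}}A(V_j)\Leftrightarrow y_i\in V_j$. The two points you flag for care (shift-invariance of IN-pairs and the identity $\sigma^{i^{-1}}=(\sigma^i)^{-1}$ under the paper's convention $(\sigma^h x)_g=x_{gh}$) both check out, so nothing further is needed.
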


\subsection{Technical preparations}\label{sec: technical prep}
In this part, we provide some tools which will be successively used in the next sections.
We start by defining properties of a family of sequences $(r^n_i)_{i=1}^\infty$ where $n\in \N$ of real numbers
which will serve as radii of certain balls in a later step of our construction. 
In particular, we ask that the following holds for every $n\in \N$ (if applicable)
\begin{enumerate}[label=\textnormal{(R\arabic*)}]
 \item \label{cond:R0} $(r_1^n)_{n\in \N}$ is a strictly decreasing null-sequence.
 \item \label{cond:R1} $(r^n_i)_{i\in\N}$ is a strictly decreasing null-sequence.
 \item \label{cond:R2} There exists $m_n \in 4\N+1$ such that  $r_1^{n+1}=r^n_{m_n}$.
 \item \label{cond:R3} Suppose $r_j^{n+1}=r_i^n$ for some $i,j\in \N$. 
 If $j=1 \mod 4$, then $r_{j+1}^{n+1}=r_{i+1}^n$.
 If $j=2 \mod 4$, then $r_{j+3}^{n+1}=r_{i+1}^n$.
\item \label{cond:R4} For all $\theta\in \T$ and every $i\in \N$,
we have $B_{r_i^n}(\theta)\setminus \overline{B_{r_{i+1}^n}(\theta)}\neq \emptyset$. 
\end{enumerate}
Notice that \ref{cond:R2} and \ref{cond:R3} imply
that the sequence $r^{n+1}$ is obtained from  $(r^n_i)_{i=m_n}^\infty$ 
by adding two extra entries $r_{j+1}^{n+1}$ and $r_{j+2}^{n+1}$ between $r_{i}^n=r_{j}^{n+1}$ 
and $r_{i+1}^n=r_{j+3}^{n+1}$ for each even index $i\ge m_n$ and appropriately chosen $j\in\N$.

Observe that it is always possible to find a family with the above properties:
First, choose a null-sequence $(r_i)$ such that for some $\theta\in \T$ we have
$B_{r_i}(\theta)\setminus \overline{B_{r_{i+1}}(\theta)}\neq \emptyset$.
This is possible since under the present assumptions $\T$ cannot have isolated points.
Now, as pointed out in
Remark~\ref{rem: action of ET on T}, $E(\T)$ acts transitively and isometrically
on $\T$ which clearly gives that $B_{r_i}(\theta)\setminus \overline{B_{r_{i+1}}(\theta)}\neq \emptyset$
actually holds for all $\theta \in \T$.
An appropriate re-labelling of $(r_i)$ (in an obviously non-injective fashion) provides us with a family $(r_i^n)$ which satisfies the above.
It is worth mentioning and will be used frequently that we can choose the radii $r_1^n$
arbitrarily small.

By means of the above radii $(r_i^n)$, we now construct real-valued functions $f_n$ which will 
be the building blocks of the semicocycles in the next section.
Given $n\in \N$, let $f_n\:(0,\infty)\to[0,1]$ be a 
continuous function which vanishes outside $(0,r_1^n)$ and
verifies
$f_n(x)=1$ for all $x\in[r^n_{i+1},r^n_{i}]$ when $i=2\mod 4$ and
$f_n(x)=0$ for all $x\in[r^n_{i+1},r^n_{i}]$ when $i=0\mod 4$.
\begin{figure}[!h]
\begin{center}
 \includegraphics[scale=2]{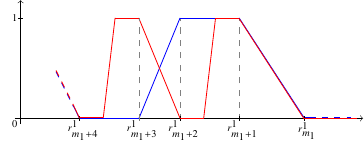}
 \caption{The graphs of $f_1$ (blue) and $f_2$ (red).
 }\label{pic: 1}
\end{center}
\end{figure}
Obviously, $f_n$ cannot be extended to a continuous function on $[0,\infty)$.
The next statement follows from the fact that for each $n\ge 1$ the function 
$f_{n+1}$ assumes both the values $0$ and $1$ on all but finitely many
of those intervals of the form $[r^n_{i+1},r_i^n]$ on which $f_n$ is constant
(specifically: on all such intervals
contained in $(0,r_1^{n+1})$, see Figure~\ref{pic: 1}). 

\begin{lem}\label{lem: free interval of f_n}
 For each $\alpha \in \N_0$, each $s\in\N$ and all $a\in\{0,1\}^s$,
 there is $j\in \N$ such that with $I_a^{\alpha}=[r_{j+1}^{\alpha+s},r_{j}^{\alpha+s}]$  we have
 $(f_n(x))_{n=\alpha+1,\ldots,\alpha+s}=a$ for all $x\in I_a^{\alpha}$.
\end{lem}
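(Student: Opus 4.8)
The plan is to prove, by induction on $s$ (for a fixed $\alpha\in\N_0$), the following strengthening: for every $a\in\{0,1\}^s$ and every $N\in\N$ there is an \emph{even} index $j\ge N$ such that $f_{\alpha+m}\equiv a_m$ on $I=[r^{\alpha+s}_{j+1},r^{\alpha+s}_j]$ for all $m=1,\dots,s$. The additional requirement $j\ge N$ is what makes the induction close: it lets me place each interval so deep (i.e.\ so close to $0$) that the next refinement still lies in the range where both values of $f$ are available. The lemma is then the special case $N=1$.

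The engine of the argument is a quantitative form of the observation preceding the statement. By the very definition of $f_n$ together with \ref{cond:R1}, the function $f_n$ is constant---equal to $1$ if $i\equiv 2\pmod{4}$ and to $0$ if $i\equiv 0\pmod{4}$---on each of the infinitely many intervals $[r^n_{i+1},r^n_i]$ with $i$ even. Using \ref{cond:R2} and \ref{cond:R3} I would then trace the refinement of level $n$ by level $n+1$ explicitly: since $r^{n+1}_1=r^n_{m_n}$ with $m_n\equiv 1\pmod{4}$, an easy induction on the index shows that for even $i>m_n$ one has $r^n_i=r^{n+1}_{2(i-m_n)}$ with $2(i-m_n)\equiv 2\pmod{4}$, while $r^n_{i+1}=r^{n+1}_{2(i-m_n)+3}$. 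Hence $[r^n_{i+1},r^n_i]$ splits into the three level-$(n+1)$ intervals with left indices $2(i-m_n)$, $2(i-m_n)+1$ and $2(i-m_n)+2$. Reading off the definition of $f_{n+1}$, the first carries $f_{n+1}\equiv 1$ and the third $f_{n+1}\equiv 0$; both have even index, these indices are at least $2(i-m_n)$, and they diverge as $i\to\infty$. I expect this bookkeeping modulo $4$ to be the only delicate point of the proof.

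With this engine the induction is routine. For $s=1$ I would simply pick an even $j\ge N$ with $j\equiv 2\pmod{4}$ when $a_1=1$ and $j\equiv 0\pmod{4}$ when $a_1=0$, which exists by \ref{cond:R1}. For the step, given $a\in\{0,1\}^{s+1}$ and $N$, I apply the induction hypothesis to the truncation $(a_1,\dots,a_s)$ with an auxiliary threshold $N_0$, obtaining an even $j_0\ge N_0$ and an interval $J_0=[r^{\alpha+s}_{j_0+1},r^{\alpha+s}_{j_0}]$ on which $f_{\alpha+1},\dots,f_{\alpha+s}$ realise $a_1,\dots,a_s$. Choosing $N_0>m_{\alpha+s}$, the engine (with $n=\alpha+s$ and $i=j_0$) refines $J_0$ into level-$(\alpha+s+1)$ pieces and produces an even subinterval $J_1\subseteq J_0$ on which $f_{\alpha+s+1}\equiv a_{s+1}$. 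Since $f_{\alpha+1},\dots,f_{\alpha+s}$ are constant on $J_0\supseteq J_1$, they retain the values $a_1,\dots,a_s$ on $J_1$, so $J_1$ realises all of $a$ at level $\alpha+(s+1)$.

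It remains to secure the depth requirement. The index of $J_1$ is at least $2(j_0-m_{\alpha+s})\ge 2(N_0-m_{\alpha+s})$, so taking $N_0=m_{\alpha+s}+\lceil N/2\rceil+1$ forces this index to be $\ge N$ (and automatically $>m_{\alpha+s}$, keeping us in range). This closes the induction, and setting $N=1$ yields the lemma.
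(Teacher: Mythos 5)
Your proposal is correct and is essentially the argument the paper intends: the lemma is stated there without a formal proof, justified only by the observation that $f_{n+1}$ takes both values $0$ and $1$ on every constant interval of $f_n$ contained in $(0,r_1^{n+1})$, which is exactly your ``engine,'' and your index bookkeeping modulo $4$ checks out. The only added ingredient is your strengthened induction hypothesis (an even index $j\ge N$), which is the right way to make the implicit induction close and matches the paper's parenthetical restriction to intervals inside $(0,r_1^{n+1})$.
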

\begin{rem}\label{rem: I is rho-realisable}
 Notice that \ref{cond:R4} yields that for each $\theta\in \T$ there is $\theta'\in\T$ with $\rho(\theta,\theta')\in I_a^\alpha$.
\end{rem}

\subsection{Tame non-null extensions}
 We now turn to the construction of tame but non-null semicocycle extensions.
 Note that such extensions ask for at least two orbits in $\T$: one containing $\theta_0$ (along which $f$ is continuous)
 and another one which hits a non-empty set of discontinuity points $D_f$ of $f$.
\begin{thm}\label{thm:tame_non-null_example}
 Let $\T$ be an infinite compact metric space on which $G$ acts minimally by isometries.
 Suppose there are at least two distinct $G$-orbits in $\T$
 and assume there is a point $\theta\in \T$ on which $G$ acts locally almost freely.
 Then there exists a tame and non-null almost automorphic extension
 $(X_{f},G)$ of $(\T,G)$.
 
 If, additionally, for one (and hence every) point $\theta\in \T$ the orbit $G\theta\ssq \T$ is measurable, then
 $(X_{f},G)$ can be chosen to be regular.
\end{thm}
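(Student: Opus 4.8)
The plan is to realise $(X_f,G)$ as a semicocycle extension supported on a \emph{single} discontinuity orbit. First I would fix $\theta_0$ and $\theta$ in two distinct $G$-orbits (possible by hypothesis), put $K=[0,1]^{\N}$, and define $f\colon G\theta_0\to K$ by $f(\w)=(f_n(\rho(\w,\theta)))_{n\in\N}$, with the functions $f_n$ and radii $(r_i^n)$ from Section~\ref{sec: technical prep}. Since $\theta\notin G\theta_0$ and each $f_n$ is continuous on $(0,\infty)$, the map $\w\mapsto(f_n(\rho(\w,\theta)))_n$ is continuous on $G\theta_0$, so $f$ is a $K$-valued semicocycle over $(\T,G,\theta_0)$. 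I would then show $D_f=\{\theta\}$: away from $\theta$ this map extends continuously (as $\rho(\cdot,\theta)>0$ there), so $F$ has singleton sections; at $\theta$ the sequence $(f_n(\rho(\w,\theta)))_n$ fails to converge as $\w\to\theta$ along $G\theta_0$, because by \ref{cond:R4} and the density of $G\theta_0$ one can approach $\theta$ through both the $0$- and $1$-plateaus of $f_1$, whence $\#F(\theta)>1$.

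Almost automorphy then follows quickly. Invariance under no rotation is immediate from Proposition~\ref{prop: criterion imnvariance under no rotation}: given $\theta_1\neq\theta_2$, transitivity of $E(\T)$ (Remark~\ref{rem: action of ET on T}) gives $\xi\in E(\T)$ with $\xi\theta_1=\theta\in D_f$, and since each element of $E(\T)$ is a bijection (Theorem~\ref{thm: representation of equicont sys}) we get $\xi\theta_2\neq\theta$, i.e. $\xi\theta_2\notin D_f$. Hence $f$ is a semicocycle extension, and by Theorem~\ref{thm: equivalence semicocylce extension and almost automorphic system} the system $(X_f,G)$ is an almost automorphic (indeed almost one-to-one) extension of $(\T,G)$ with factor map $\pi$. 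Tameness is then a direct invocation of Lemma~\ref{lem: criterion for tameness}: $D_f=\{\theta\}$ is countable, $G\theta'\cap D_f\ssq\{\theta\}$ is finite for every $\theta'\in\T$, and $G$ acts locally almost freely on $\theta$ by assumption.

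The crux is non-nullness, which I would establish via Proposition~\ref{prop: non-null iff} with the disjoint compact sets $V_0=\{k\in K:k_1=0\}$ and $V_1=\{k\in K:k_1=1\}$. By \eqref{eq: coordinate entries lie in sections}, for $x\in X_f$ and $g\in G$ with $g\pi(x)\neq\theta$ one has $x_g=(f_n(\rho(g\pi(x),\theta)))_n$, so the condition $x_g\in V_j$ becomes the \emph{radial} condition $f_1(\rho(g\pi(x),\theta))=j$. Writing $c_i=g_i^{-1}\theta$ and using that $g_i$ acts isometrically, realising a prescribed pattern $a\in\{0,1\}^{\ell}$ reduces to finding $p\in\T$ with $p\neq c_i$ and $f_1(\rho(p,c_i))=a_i$ for all $i$, after which any $x\in\pi^{-1}(p)$ works. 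Thus non-nullness turns into the purely metric assertion that, for each $\ell$, one can place reference points $c_1,\dots,c_\ell\in G\theta$ whose $f_1$-level sets are independent, so that every Boolean pattern occurs. This is precisely what the nested radii are built for: $f_1$ alternates between $0$ and $1$ on the annuli $\{\rho(\cdot,c_i)\in[r^1_{j+1},r^1_{j}]\}$ accumulating at each $c_i$, and the insertion rules \ref{cond:R2}--\ref{cond:R3}, together with Lemma~\ref{lem: free interval of f_n} and the $\rho$-realisability of the free intervals (Remark~\ref{rem: I is rho-realisable}), let me add one further independent coordinate each time I descend one scale. I expect this placement to be the main obstacle: I must produce independence windows of \emph{unbounded} size while keeping the entire phenomenon concentrated at the single point $\theta$, so that — consistently with tameness — no \emph{infinite} independence set can be assembled, the obstruction to the latter being exactly the finiteness of $G\theta\cap D_f$ and local almost freeness exploited in Lemma~\ref{lem: criterion for tameness}.

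For the regularity addendum I would observe that $GD_f=G\theta$ is $G$-invariant and, by assumption, measurable, so by ergodicity of $m_\T$ its measure is $0$ or $1$. It equals $0$: immediately when $G$ is countable, since then $G\theta$ is a countable set and $m_\T$ is non-atomic (an infinite minimal equicontinuous system has no atoms), and in general because $\Stab(\theta)$ is a closed subgroup of infinite index in the compact group $E(\T)$ — as $\T\cong E(\T)/\Stab(\theta)$ is infinite — hence Haar-null. Then $m_\T(GD_f)=0$, and by the Remark following Theorem~\ref{thm: equivalence semicocylce extension and almost automorphic system} the extension $(X_f,G)$ is regular.
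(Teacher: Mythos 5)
Your overall architecture (a semicocycle whose discontinuity set is the single point $\theta$, invariance under no rotation via Proposition~\ref{prop: criterion imnvariance under no rotation} and transitivity of $E(\T)$, tameness via Lemma~\ref{lem: criterion for tameness}, regularity via ergodicity of $m_\T$ and the existence of a second orbit) matches the paper, and those parts of your argument are sound. The problem is the part you yourself flag as the crux: non-nullness.

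Your reduction makes the witness sets $V_0,V_1$ depend only on the \emph{first} coordinate of $K=[0,1]^{\N}$, so the condition $x_{g_i}\in V_{a_i}$ becomes $f_1(\rho(p,c_i))=a_i$ with $c_i=g_i^{-1}\theta$, and you then need, for every $\ell$, centres $c_1,\dots,c_\ell$ such that the $f_1$-level sets around them are combinatorially independent (all $2^\ell$ patterns realised by some $p$). Nothing in \ref{cond:R0}--\ref{cond:R4} or Lemma~\ref{lem: free interval of f_n} delivers this: those conditions govern how the \emph{different} functions $f_1,\dots,f_s$ interleave along a \emph{single} radius, not how the annuli of the single function $f_1$ around several distinct centres intersect. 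Indeed, since $G$ acts by isometries, translating a centre does not change the radial profile, and there is no mechanism in the construction forcing the sets $\{p:\rho(p,c_i)\in[r^1_{j+1},r^1_j],\ j=2\bmod 4\}$ for $i=1,\dots,\ell$ to be independent in a general metric space; this is a genuinely hard placement problem (of the flavour of the Kerr--Li Toeplitz examples), not a consequence of the nested-radii setup. The paper's mechanism is the opposite of yours: it keeps a \emph{single} reading radius and varies the \emph{function}. Concretely, it takes a sequence $g_n\theta\to\theta$ of distinct points, truncates $f_n$ to $\bar f_n$ supported in an interval $J_s$ bounded away from $0$ (so that each $g_n\theta$ is \emph{not} a discontinuity point and $D_f=\{\theta\}$ survives), sets $f(\w)=\sum_n\bar f_n(\rho(\w,g_n\theta))$ with the balls $B_{r_1^n}(g_n\theta)$ pairwise disjoint, and then uses the isometry identity $\rho(g_nh_a\theta_0,g_n\theta)=\rho(h_a\theta_0,\theta)$ so that the one number $\rho(h_a\theta_0,\theta)\in I_a^{\alpha(s)}$ determines all $s$ bits $a_1,\dots,a_s$ via Lemma~\ref{lem: free interval of f_n} applied to the block $f_{\alpha(s)+1},\dots,f_{\alpha(s+1)}$; the independence window of length $s$ is the fresh batch $(g_{\alpha(s)+1},\dots,g_{\alpha(s+1)})$, with $V_0=\{0\}$, $V_1=\{1\}$ in $K=[0,1]$. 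To repair your proof you would either have to prove the independent-placement claim for $f_1$ (which you have not done and which does not follow from the stated hypotheses), or switch to the paper's block construction.

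Two smaller remarks: your regularity argument is correct (and the Haar-null observation, while unnecessary, is fine); and your verification that $D_f=\{\theta\}$ and that $f$ is invariant under no rotation via bijectivity of elements of $E(\T)$ is exactly the paper's argument.
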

\begin{rem}
Observe that a sufficient condition for orbits in $\T$ to be measurable is to assume that $G$ is $\sigma$-compact:
 Then, each $G$-orbit is $\sigma$-compact as well and hence a 
 countable union of compact sets and therefore measurable.
\end{rem}
\begin{proof}[Proof of Theorem~\ref{thm:tame_non-null_example}]
Pick $\theta\in \G$ such that $G$ acts locally almost freely on $\theta$,
let $g_0$ coincide with the neutral element $e_G$ in $G$
and choose a sequence $(g_n)_{n=1}^\infty$ in $G$ such 
that $(g_n\theta)_{n=0}^\infty$ has pairwise distinct elements and
$g_n\theta\to \theta$ as $n\to\infty$. 

Consider a collection of radii $\{(r^n_i)_{i=1}^\infty :n=1,2,\ldots\}$ 
which satisfies \ref{cond:R0}--\ref{cond:R4}. 
Without loss of generality, we may assume that for each $n\ge 1$ the radius $r^n_1$ is sufficiently small 
to guarantee that
\begin{align}\label{eq: disjoint balls non-null}
 \{B_{r_1^n}(g_n\theta):n=1,2,\ldots\} 
\end{align}
is a collection of pairwise disjoint balls. 

For $n\in \N$, let $f_n\colon(0,\infty)\to[0,1]$ denote the function associated to
the sequence $(r^n_i)_{i=1}^\infty$ as described in Section~\ref{sec: technical prep}. 
Given $s\in \N$, set $\alpha(s)=\sum_{j=0}^{s-1} j$ and let $J_s\ssq(0,\infty)$ be a closed interval such that 
$\bigcup_{a\in \{0,1\}^{s}} I_a^{\alpha(s)}\ssq J_s$ and
$f_{\alpha(s)+1}(x)=f_{\alpha(s)+2}(x)=\ldots=f_{\alpha(s+1)}(x)=0$ if $x$ is a boundary point of $J_s$.
Here, the intervals $I_a^{\alpha(s)}$ are provided by Lemma~\ref{lem: free interval of f_n}.

Given $n\in \N$ with $\alpha(s)< n\leq  \alpha(s+1)$, define $\bar f_n \:[0,\infty)\to [0,1]$ by
\[
\bar{f}_n(x)=\begin{cases}
f_n(x) &\text{if }x \in J_s,\\
0 &\text{otherwise.}
\end{cases}
\]
Note that all $\bar f_n$ are continuous.
Further, for every $s\in \N$, all $\ell=1,\ldots,s$, every $a\in \{0,1\}^s$, and every $x\in I_a^{\alpha(s)}\ssq J_s$
we have
\begin{align}\label{eq: bar f verifies free lemma}
 \bar{f}_{\alpha(s)+\ell}(x)={f}_{\alpha(s)+\ell}(x)=a_\ell,
\end{align}
due to Lemma~\ref{lem: free interval of f_n}.

Now, for $\w\in \T$ define 
\begin{equation*}
{f}(\omega)=\begin{cases}
              0 & \mbox{if } \omega=\theta, \\
              \sum_{n=1}^\infty \bar{f}_n(\rho(\w,g_n\theta)) & \mbox{otherwise}.
            \end{cases}
\end{equation*}
Due to the disjointness of the collection of balls in \eqref{eq: disjoint balls non-null},
this defines a function ${f}\colon\T\to [0,1]$ which is further continuous on $\T\setminus \{\theta\}$. 
Hence, if $\theta_0\in \T$ is such that $G\theta_0\cap \{\theta\}=\emptyset$, 
then $f$ is a semicocycle over $(\T,G,\theta_0)$.

We construct $F=\overline{\gr f}\ssq \T\times [0,1]$ and $X_{f}$ as described in Section~\ref{sec: semi-cocycle extensions}.
Clearly, $D_{f}=\{\theta\}$.
To see that $f$ is invariant under no rotation, pick any distinct $\theta_1,\theta_2\in \T$ and
choose $\xi\in E(\T)$ such that $\xi\theta_1=\theta$.
Then, $\xi\theta_1\in D_{{f}}$ and $\xi\theta_2\in \T\setminus D_{f}$ so that
Proposition~\ref{prop: criterion imnvariance under no rotation} yields that $f$ is invariant under no rotation.
Hence, by Theorem~\ref{thm: equivalence semicocylce extension and almost automorphic system}, $(X_{f}, G)$ is an almost automorphic extension of $(\G, G)$.
Further, by Lemma~\ref{lem: criterion for tameness}, we immediately obtain that $(X_{f},G)$ is tame.

Now, suppose we are given $a\in \{0,1\}^s$ for some $s\in\N$.
Since $G$ acts minimally on $\T$, we can choose $h_a\in G$ such that
$\rho(h_a\theta_0,\theta)$ is in the interval $I^{\alpha(s)}_{a}$
(see also Remark~\ref{rem: I is rho-realisable}).
As $G$ acts by isometries, we further have $\rho(g_nh_a\theta_0,g_n\theta)=\rho(h_a\theta_0,\theta)$ for every $n\ge 1$.
In particular, this gives
${f}(g_nh_a\theta_0) = {\bar f_n}(\rho(g_nh_a\theta_0,g_n\theta))$
for $n=\alpha(s)+1,\ldots,\alpha(s+1)$, due to
the definition of $f$ and the disjointness of the balls
in \eqref{eq: disjoint balls non-null}.

Hence, with $(x_g)_{g\in G}=({f}(g h_a\theta_0))_{g\in G}$ we have
\[
 x_{g_n} = {\bar f_n}(\rho(g_nh_a\theta_0,g_n\theta))
 ={f_n}(\rho(h_a\theta_0,\theta))=a_n \in V_{a_n} \quad \text{for } n=\alpha(s)+1,\ldots,\alpha(s+1),
\]
where $V_0=\{0\}$ and $V_1=\{1\}$ and where we used \eqref{eq: bar f verifies free lemma}.
Since $s\in\N$ and $a\in \{0,1\}^s$ were arbitrary,
we obtain that $(X_{f},G)$ is non-null
by means of
Proposition~\ref{prop: non-null iff}.

In order to see the second part, first note that the assumption of a measurable orbit implies that
every orbit of $(\T,G)$ is measurable and further, that every orbit is necessarily of 
the same $m_\T$-measure, due to Remark~\ref{rem: action of ET on T}.
Since $m_\T$ is further ergodic and orbits are clearly invariant sets, this implies
that orbits are of measure zero as we assume that there is more than one orbit in $\T$.\footnote{Note that this
immediately yields that there are actually uncountably many orbits in $\T$.}
Now, let $\pi$ denote the factor map from $(X_f,G)$ to $(\T,G)$.
Clearly, the projection of the almost automorphic points $\pi(X_0)$ (see \eqref{eq: defn almost automorphic points})
coincides with the complement of $G\theta$ and is hence of full measure.
\end{proof}

\begin{rem}
 In view of the above theorem, we may ask if there is a minimal isometric dynamical system with more than one orbit such that its almost automorphic extensions are null if and only if they are tame.
 Note that the assumption of a point $\theta\in \T$ on which $G$ acts locally almost freely is only sufficient but not necessary in order to rule out this possibility:
 Clearly, the canonical action of the special orthogonal group $\SO$ on the $2$-sphere $\S^2$ is minimal, isometric
 and not locally almost free on any $\theta \in \S^2$.
 While $(\S^2,\SO)$ only allows for exactly one orbit, we may
 consider the natural action of the product $G=\SO\times H$ on $\T=\S^2\times \T_1$,
 where $(\T_1,H)$ is some minimal isometric dynamical system which allows for at least two distinct orbits.
 It is straightforward to see that $(\T,G)$ is still minimal and isometric and that $(\T,G)$ allows for
 more than one orbit.
 Furthermore, $G$ does not act locally almost freely on any $\theta\in \T$.
 However, we clearly have that if there is some non-null and tame almost automorphic extension $(X,H)$ of $(\T_1,H)$,
 then $(\S^2\times X,G)$ also is a non-null and tame almost automorphic extension of $(\T,G)$.
\end{rem}

\begin{rem}
 According to \cite[Corollary 5.4]{Glasner2018} and
 \cite[Theorem~1.2]{FuhrmannGlasnerJagerOertel2018}, a minimal tame dynamical system on a metric space
 which allows for an invariant measure is necessarily regularly almost automorphic.
 In fact, a close inspection of the proof in \cite{FuhrmannGlasnerJagerOertel2018} shows that
 every tame almost automorphic system with the property that its maximal equicontinuous factor is metrizable
 and such that orbits in $\T$ are measurable is automatically regular.
 Against this background, we could also reformulate the second part of Theorem~\ref{thm:tame_non-null_example}
 as follows:
 If, additionally, for one (and hence every) point $\theta\in \T$ the orbit $G\theta\ssq \T$ is measurable, then
 $(X_{f},G)$ \emph{is} regular.
\end{rem}

\begin{rem}
 It is worth remarking that free minimal equicontinuous systems $(\T,G)$, where $\T$ is assumed to 
 be metrizable and infinite,
 may, in fact, allow for only two orbits: Let $\T$ be a compact topological group which allows for a dense 
 subgroup $G\leq \T$ of index $2$. 
 Then the natural action of $G$ on $\T$ has exactly two orbits and is clearly free and minimal.
 
 Note that a well-known example of such $\T$ is given by the product
 $\T=\{0,1\}^\N$ of countably many copies of the finite field $\{0,1\}$.
 We may consider $\T$ as a vector space (and hence a group) over 
 the field $\{0,1\}$. 
 Pick a base $\mc B$ of $\T$ which contains in particular those elements which have exactly one entry equal to $1$.
 Observe that $\mc B$ necessarily contains an element $b$ with infinitely many
 entries equal to $0$ and infinitely many entries equal to $1$.
 Now, it is straightforward to see that the linear span of $\mc B\setminus \{b\}$ is a subgroup of
 index $2$ in $\{0,1\}^\N$ while it is clearly dense in $\T$.
\end{rem}

\subsection{Non-tame extensions}
We now turn to the construction of non-tame examples.
As Lemma~\ref{lem: criterion for tameness} suggests, such examples may in general need at least 
countably many discontinuity points 
of the associated semicocycle.
Therefore, the invariance under no rotation has to be obtained in a more elaborate fashion than in the previous section.
\begin{thm}\label{thm: non-tame and non-null examples}
 Let $\T$ be an infinite compact metric space on which $G$ acts minimally by isometries.
 Suppose there are at least two distinct orbits in $\T$ under $G$.
 Then there exists a non-tame almost automorphic extension
 $(X_{f},G)$ of $(\T,G)$.
 
 If, additionally, for one (and hence every) point $\theta\in \T$ the orbit $G\theta\ssq \T$ is measurable, then
 $(X_{f},G)$ can be chosen to be regular.
\end{thm}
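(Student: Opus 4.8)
The plan is to construct a semicocycle $f$ whose discontinuity set $D_f$ meets each orbit in a countable (but non-finite in the relevant combinatorial sense) way, so that the tameness criterion of Lemma~\ref{lem: criterion for tameness} is deliberately \emph{violated}, while invariance under no rotation is still secured. As in the proof of Theorem~\ref{thm:tame_non-null_example}, I would fix a point $\theta\in\T$ and a sequence $(g_n)$ in $G$ with $g_n\theta$ pairwise distinct and $g_n\theta\to\theta$; the building blocks will again be the functions $\bar f_n$ supported on shrinking balls around the points $g_n\theta$, placed via the radii $(r_i^n)$ satisfying \ref{cond:R0}--\ref{cond:R4}. The essential difference from the tame case is that now I want infinitely many points where $f$ is genuinely discontinuous \emph{along a single orbit}, so that for disjoint target sets $V_0=\{0\}$, $V_1=\{1\}$ and a suitable sequence $(g_i)$ I can realise, via Proposition~\ref{prop: non-tameness iff}, \emph{every} prescribed pattern $a\in\{0,1\}^\N$, not merely finite patterns.

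Concretely, I would arrange the discontinuity points $D_f$ to form a set which, under the action of $E(\T)$, still separates any two distinct $\theta_1,\theta_2\in\T$ in the sense required by Proposition~\ref{prop: criterion imnvariance under no rotation}: for each pair I need some $\xi\in E(\T)$ with $\xi\theta_1\in D_f$ and $\xi\theta_2\notin D_f$. Since $E(\T)$ acts transitively and isometrically (Remark~\ref{rem: action of ET on T}) and, by Corollary~\ref{cor: stabilizers have empty interior}, one can move $\theta_1$ into $D_f$ while keeping $\theta_2$ outside, invariance under no rotation will follow exactly as before. The key new step is the combinatorial one: using Lemma~\ref{lem: free interval of f_n}, for any finite pattern $a\in\{0,1\}^s$ there is an annulus $I_a^\alpha$ on which $(\bar f_n)_{n=\alpha+1}^{\alpha+s}$ realises $a$ exactly. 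By minimality and isometricity of the $G$-action (Remark~\ref{rem: I is rho-realisable}), I can then pick $h\in G$ placing $\rho(h\theta_0,\theta)$ into any desired annulus, and because the balls $B_{r_1^n}(g_n\theta)$ are pairwise disjoint, the coordinates $x_{g_n}=f(g_nh\theta_0)=f_n(\rho(h\theta_0,\theta))$ reproduce $a$ across the index window $n=\alpha(s)+1,\dots,\alpha(s+1)$. Telescoping these windows over all $s$ yields, by a compactness/limit argument in $X_f$, a single point realising any infinite pattern, giving non-tameness via Proposition~\ref{prop: non-tameness iff}.

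The regularity assertion I would handle exactly as in Theorem~\ref{thm:tame_non-null_example}: the discontinuity set is contained in $G\theta$, and measurability of one orbit forces every orbit to be measurable, of equal $m_\T$-measure, hence of measure zero by ergodicity of $m_\T$ and the hypothesis of at least two orbits (see Remark~\ref{rem: action of ET on T}). Thus $m_\T(GD_f)=0$ and the projection $\pi(X_0)$ of the almost automorphic points has full measure, so $(X_f,G)$ is regular.

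The main obstacle I anticipate is the bookkeeping that simultaneously guarantees (i) invariance under no rotation through a \emph{countably infinite} discontinuity set and (ii) the realisability of \emph{all} infinite binary patterns. In the tame construction of Theorem~\ref{thm:tame_non-null_example} these two demands were in tension only mildly because $D_f=\{\theta\}$ was a single point; here I must spread discontinuities along the orbit of $\theta$ (so that $G\theta\cap D_f$ is infinite, defeating the hypothesis of Lemma~\ref{lem: criterion for tameness}) while still ensuring each $\bar f_n$ is continuous and the global sum $\sum_n \bar f_n(\rho(\cdot\,,g_n\theta))$ defines a genuine semicocycle with controlled discontinuities. Careful choice of the radii $r_1^n\to 0$ (forcing the balls disjoint and the accumulation $g_n\theta\to\theta$ to be the \emph{only} new discontinuity) together with the nested-annulus structure from \ref{cond:R2}--\ref{cond:R3} should reconcile the two requirements, and I expect the verification that no \emph{extra} discontinuity points are created outside $G\theta$ to be the most delicate part of the argument.
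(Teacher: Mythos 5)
Your overall strategy (a semicocycle with infinitely many discontinuities along one orbit, non-tameness via Proposition~\ref{prop: non-tameness iff}, invariance under no rotation via Proposition~\ref{prop: criterion imnvariance under no rotation}, regularity as in Theorem~\ref{thm:tame_non-null_example}) is the right one, but the combinatorial core of your argument has a genuine gap. You realise a pattern $a\in\{0,1\}^s$ on the \emph{shifting} window $n=\alpha(s)+1,\ldots,\alpha(s+1)$ and then hope to ``telescope'' these windows by compactness into a single infinite independence set. This cannot work: Proposition~\ref{prop: non-tameness iff} requires one \emph{fixed} sequence $(g_i)_{i\in\N}$ such that every $a\in\{0,1\}^\ell$ is realised on the initial segment $g_1,\ldots,g_\ell$, whereas your construction realises each pattern on a block of indices that escapes to infinity with $s$; a limit of such points realises nothing on any fixed coordinate. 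Indeed, shifting windows together with the truncated functions $\bar f_n$ is exactly the mechanism of Theorem~\ref{thm:tame_non-null_example}, and it produces a tame non-null system, not a non-tame one. The paper's proof instead keeps the full, untruncated functions $f_n$ (so that all of $f_1,\ldots,f_s$ oscillate on the common interval $(0,r_1^s)$, by \ref{cond:R2}--\ref{cond:R3}) and anchors every pattern at $\alpha=0$: for each $a\in\{0,1\}^s$ one picks $h_a$ with $\rho(h_a\theta_0,\theta)\in I_a^0$, so that $x_{g_n}=f_n(\rho(h_a\theta_0,\theta))=a_n$ for $n=1,\ldots,s$ along the \emph{same} initial segment for every $s$. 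The price is that each $g_n\theta$ (and the limit $\theta$) becomes a genuine discontinuity point --- which is what you want, but which your use of the $\bar f_n$ would prevent; your proposal wavers on this point, at one moment asking for infinitely many discontinuities along the orbit and at another expecting the accumulation point $\theta$ to be the only one.

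The second gap is the invariance under no rotation, which you dismiss as following ``exactly as before.'' With $D_f$ a countably infinite set accumulating at $\theta$, transitivity alone does not let you place $\xi\theta_1$ in $D_f$ while keeping $\xi\theta_2$ outside: once $\xi\theta_1$ is prescribed, $\xi\theta_2$ is constrained to lie at distance $\rho(\theta_1,\theta_2)$ from it, and for intermediate distances such a point can land inside the union of the supporting balls $B_{r_1^n}(g_n\theta)$ no matter which $g_n\theta$ you aim at. The paper resolves this by adjoining an auxiliary discontinuity point $\theta'$ in the \emph{same} orbit as $\theta$ with $\rho(\theta,\theta')>2r$, an element $g_1$ supplied by Corollary~\ref{cor: stabilizers have empty interior} with $g_1\theta'\neq\theta'$, and a three-case analysis on $\rho(\theta_1,\theta_2)$: small distances are handled by the annular gaps around the $g_n\theta$, intermediate ones by sending $\theta_1$ to $\theta'$, and large ones by sending $\theta_1$ to $\theta$ and correcting with $g_1$ when $\xi\theta_2$ falls near $\theta'$ (this is the content of Claim~\ref{claim: invariance by topological means}). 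None of this is automatic, and your proposal contains no substitute for it. The regularity argument you give is fine.
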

\begin{proof}
Pick distinct $\theta,\theta'\in \G$ from one and the same orbit, that is, $G\theta= G\theta'$.
Let $r>0$ be such that $\rho(\theta,\theta')>2r$.
Due to Corollary~\ref{cor: stabilizers have empty interior}, there is $g_1\in G$ with
$B_r(\theta)\ni g_1\theta\neq \theta$ and $g_1\theta'\neq \theta'$.
Let $g_0$ coincide with the neutral element $e_G$ of $G$ and choose a sequence $(g_n)_{n=2}^\infty$ in $G$ such that
$(g_n \theta)_{n=0}^\infty$ consists of pairwise distinct elements with $g_n\theta\to \theta$ as $n\to\infty$
and $g_n\theta \in B_r(\theta)\cap B_r(g_1\theta)$ for all $n\in \N$. 

Consider a collection of radii $\{(r^n_i)_{i=1}^\infty :n=1,2,\ldots\}$ 
which satisfies \ref{cond:R0}--\ref{cond:R4}. 
By choosing the radii $r_1^n$ sufficiently small,
we may assume that first,
\begin{align}\label{eq: disjoint balls}
    \{B_{2r_1^1}(g_1\theta)\} \cup \{B_{2r_1^{n-1}}(g_n\theta)\:n=2,3,\ldots\}
\end{align}
is a family of pairwise disjoint balls, secondly,
\begin{align}\label{eq: big ball}
B_r(\theta)\cap B_r(g_1\theta)\supseteq \operatorname{cl}\left({\bigcup_{n=1}^\infty B_{r_1^n}(g_n\theta)}\right)
\end{align}
and thirdly, that
\begin{align}\label{eq: g1 not in stabiliser of theta prime}
 \rho(\theta',g_1\theta')>2\cdot r_1^1.
\end{align}

Let $\Theta=\{g_n\theta\:n= 0,1,\ldots\}\cup\{\theta'\}$. 
Define ${f}\colon\T\to [0,1]$ by
\begin{align}\label{eq: defn non-tame semicocycle}
f(\omega)=\begin{cases}
              0 & \mbox{if } \omega\in\Theta, \\
              f_1(\rho(\w,\theta'))+\sum_{n=1}^\infty f_n(\rho(\w,g_n\theta)) & \mbox{otherwise}.
            \end{cases}
\end{align}
Observe that $f$ is continuous outside the set $\Theta \ssq G\theta$.
Further, by the assumptions, there is $\theta_0\in \T\setminus G\theta$ so that the restriction of $f$ to $G\theta_0$
is, in fact, continuous.
We may hence consider $f$ to be a semicocycle over $(\T,G,\theta_0)$. 
We construct $F$ and $X_{f}$ as described in Section~\ref{sec: semi-cocycle extensions}.
Clearly, $D_{f}=\Theta$.

We next show that $f$ is invariant under no rotation.
For the sake of the construction of symbolic examples in the next section,
we are going to prove slightly more than we actually need for the present purpose.\footnote{In fact, 
in order to immediately obtain invariance under no rotation in 
a way as simple as in Theorem~\ref{thm:tame_non-null_example}, we could construct $f$ in such a way that 
$\theta'$ is the unique point in $D_f$ with $2\in F(\theta)$ (by simply replacing the summand
$f_1(\rho(\omega,\theta'))$ by $2f_1(\rho(\omega,\theta'))$ in \eqref{eq: defn non-tame semicocycle}).
However, as we also aim at symbolic examples, we won't follow this path.}
To that end, let us define 
\begin{align}\label{eq: defn U Theta}
 U_{\Theta}=\operatorname{cl}\left({\bigcup_{n=1}^\infty B_{r_1^{n}}(g_n\theta) \cup B_{r_1^{1}}(\theta')}\right)=
 \operatorname{cl}\left(\bigcup_{n=1}^\infty B_{r_1^{n}}(g_n\theta)\right) \cup \overline{B_{r_1^{1}}(\theta')}.
\end{align}
\begin{claim}\label{claim: invariance by topological means}
 For distinct $\theta_1$ and $\theta_2$ in $\T$, there is 
 $\xi \in E(\T)$ such that $\xi \theta_1 \in \Theta$ and $\xi\theta_2\notin {U_{\Theta}}$.
\end{claim}
Observe that $\Theta \ssq U_{\Theta}$.
Hence, taken the above claim for granted,
we immediately obtain the invariance under no rotation from Proposition~\ref{prop: criterion imnvariance under no rotation}
so that $(X_f,G)$ is indeed a semicocycle extension --and thus, an almost automorphic extension-- of $(\T,G)$.
\begin{proof}[Proof of Claim~\ref{claim: invariance by topological means}]
Fix $\theta_1,\theta_2\in \T$ with $\theta_1\neq \theta_2$.
We have to distinguish between the following cases where we repeatedly use 
that $E(\T)$ acts transitively and isometrically on $\T$ (see Remark~\ref{rem: action of ET on T}).

{\em Case 1 ($0<\rho(\theta_1,\theta_2)\leq r_1^1$):}
In this case, since $(r_1^n)_{n\in \N}$ is a strictly decreasing null sequence (due to \ref{cond:R0}), 
there is $n_0$ such that $r^{n_0}_1\geq \rho(\theta_1,\theta_2)> r_1^{n_0+1}$.
Choose $\xi \in E(\T)$ such that $\xi \theta_1= g_{n_0+1} \theta\in \Theta$.
Then, we have $\xi\theta_2\in \overline{B_{r^{n_0}_1}(g_{n_0+1} \theta)}\setminus \overline {B_{r^{n_0+1}_1}(g_{n_0+1} \theta)}$
so that the disjointness of the family of balls in \eqref{eq: disjoint balls} gives that $\xi\theta_2\notin U_{\Theta}$.

{\em Case 2 ($r_1^1<\rho(\theta_1,\theta_2)\leq r$):}
In this case, choose $\xi \in E(\T)$ such that $\xi \theta_1=\theta'\in \Theta$.
Then $\xi\theta_2\in \overline{B_r(\theta')}\setminus \overline{B_{r_1^1}(\theta')}$
which is clearly in the complement of $U_{\Theta}$ due to \eqref{eq: big ball} and the fact that
$\rho(\theta',\theta)>2r$.

{\em Case 3 ($r<\rho(\theta_1,\theta_2)$):}
Choose $\xi\in E(\T)$ such that $\xi\theta_1=\theta \in \Theta$.
If $\xi\theta_2\notin U_{\Theta}$, we are done.
Hence, it remains to consider $\xi \theta_2 \in U_{\Theta}$.
In this case, we necessarily have $\xi\theta_2 \in \overline{B_{r_1^1}(\theta')}$, due to \eqref{eq: big ball}.
Now, observe that $g_1\xi\theta_1=g_1\theta\in \Theta$.
However, by the reverse triangle inequality, 
\[
 \rho(\theta',g_1\xi\theta_2)\geq\rho(\theta',g_1\theta')-\rho(g_1\theta',g_1\xi\theta_2)=
\rho(\theta',g_1\theta')-\rho(\theta',\xi\theta_2)> 2r_1^1-r_1^1\geq r_1^1, 
\]
where we used \eqref{eq: g1 not in stabiliser of theta prime} in the second to the last step.
Hence, $g_1\xi\theta_2\notin \overline{B_{r_1^1}(\theta')}$.
At the same time, $\rho(g_1\xi\theta_2,g_1\xi\theta_1)=\rho(\theta_2,\theta_1)>r$, so that by \eqref{eq: big ball}
we indeed obtain $g_1\xi\theta_2\notin U_{\Theta}$.
This proves the claim.
\end{proof}

In order to finish the proof of the first part, it remains to show that $(X_{f},G)$
is non-tame.
To that end, suppose we are given $a\in \{0,1\}^s$ for some $s\in\N$.
Choose $h_a\in G$ such that $\rho(h_a\theta_0,\theta)$ is in the interval $I^0_{a}$
from Lemma~\ref{lem: free interval of f_n} which is possible due to Remark~\ref{rem: I is rho-realisable}
and since $(\T,G)$ is minimal.
Since $G$ acts by isometries, we have $\rho(g_nh_a\theta_0,g_n\theta)=\rho(h_a\theta_0,\theta)$
for every $n\ge 1$ 
so that ${f}(g_nh_a\theta_0) =f_n(\rho(h_a\theta_0,\theta))$ for all $n=1,\ldots,s$ 
(due to the definition of $f$ and due the disjointness of the family of balls
in \eqref{eq: disjoint balls}).
Hence, with $(x_g)_{g\in G}=({f}(g h_a\theta_0))_{g\in G}$,
Lemma~\ref{lem: free interval of f_n} gives 
\[
 x_{g_n} =f_n(\rho(h_a\theta_0,\theta))=a_n \in V_{a_n} \quad \text{ for each } n=1,\ldots,s,
\]
where $V_0=\{0\}$ and $V_1=\{1\}$.
Since $s\in\N$ and $a\in \{0,1\}^s$ were arbitrary,
we obtain that $(X_{f},G)$ is non-tame
by means of Proposition~\ref{prop: non-tameness iff}.
This finishes the proof of the first part.

The second part follows similarly as in the proof of Theorem~\ref{thm:tame_non-null_example} since the 
complement of the projection of the almost automorphic points of $X_f$ coincides with $GD_f$ and is hence 
a countable union of orbits.
\end{proof}

\subsection{Symbolic examples} 
Note that in the constructions above we obtained $K$-valued semicocycles with $K=[0,1]$. 
It is natural to ask whether we can find \emph{symbolic} examples, that is, $\{0,1\}$-valued 
semicocycles which yield tame non-null and non-tame extensions, respectively. 
In the proofs above, it was not only important to control the number of times 
a given orbit hits the set of discontinuity points $D_f$ 
but also that there are orbits which don't hit $D_f$ at all.
Due to the fact that we dealt with $[0,1]$-valued examples, we could ensure that the set $D_f$ was at most countable
which simplified the related problems considerably.

Now, in order to restrict the set of values to $\{0,1\}$, we may change the functions $f_n$
to obtain maps $f_n'\:(0,\infty)\to \{0,1\}$ for $n=1,2,\ldots$ by setting
$f_n'(x)=1$ for all $x\in[r^n_i,r^n_{i+1}]$ when $i=2\mod 4$
and $f_n'(x)=0$ otherwise.
Observe that we immediately obtain a similar statement as Lemma~\ref{lem: free interval of f_n} if we replace 
the functions $f_n$ by $f_n'$.
However, when we proceed with these $\{0,1\}$-valued functions as in the proofs of Theorem~\ref{thm:tame_non-null_example}
and Theorem~\ref{thm: non-tame and non-null examples},
we may create new discontinuity points.
This may even imply $GD_f=\T$ which is an obvious obstruction for the constructions.

Nonetheless, under suitable extra assumptions, we can still obtain regular symbolic semicocycle extensions.
On the side of non-tame extensions, we obtain the following statement and en passant a generalisation of
\cite[Theorem~3.1]{Paul1976}.

\begin{cor}\label{cor: symbolic non-tame extensions}
Suppose $G$ is countable and $(\T,G)$ 
is a metric minimal isometric dynamical system (with $\T$ infinite).
Then there is a symbolic regular almost automorphic extension $(X_{f},G)$ which is non-tame.
\end{cor}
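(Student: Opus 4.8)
The plan is to produce a symbolic ($\{0,1\}$-valued) non-tame semicocycle extension by mimicking the construction in Theorem~\ref{thm: non-tame and non-null examples}, while circumventing the obstruction that switching from $[0,1]$-valued $f_n$ to $\{0,1\}$-valued $f_n'$ may create uncountably many discontinuity points. The key extra assumption is that $G$ is countable, which I would exploit twice: first to guarantee that orbits are $\sigma$-compact and hence measurable (so the regularity clause of the previous theorems applies), and secondly to keep the set $GD_f$ small. Since $\T$ is an infinite compact metric space with $G$ countable acting minimally, $\T$ must have more than one orbit (a single orbit would be countable, contradicting compactness plus infiniteness via Baire category), so the hypothesis ``at least two distinct orbits'' in Theorem~\ref{thm: non-tame and non-null examples} is automatically met. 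Likewise, because $G$ is countable, each orbit $G\theta$ is a countable union of points, hence measurable and of measure zero (as argued via ergodicity of $m_\T$ in the proof of Theorem~\ref{thm:tame_non-null_example}), giving regularity for free.

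First I would set up the same geometric data as in Theorem~\ref{thm: non-tame and non-null examples}: pick distinct $\theta,\theta'$ in a common orbit, invoke Corollary~\ref{cor: stabilizers have empty interior} to find $g_1$ with $g_1\theta\neq\theta$ and $g_1\theta'\neq\theta'$, choose a sequence $(g_n)$ with $g_n\theta\to\theta$ accumulating only at $\theta$, and select radii $(r_i^n)$ satisfying \ref{cond:R0}--\ref{cond:R4} together with the disjointness and separation requirements \eqref{eq: disjoint balls}--\eqref{eq: g1 not in stabiliser of theta prime}. I would then define $f$ exactly as in \eqref{eq: defn non-tame semicocycle} but with the $\{0,1\}$-valued building blocks $f_n'$ in place of $f_n$. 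The non-tameness argument then transfers verbatim: for any finite word $a\in\{0,1\}^s$ one uses minimality and Remark~\ref{rem: I is rho-realisable} to find $h_a\in G$ with $\rho(h_a\theta_0,\theta)\in I_a^0$, and isometry of the action plus the analogue of Lemma~\ref{lem: free interval of f_n} for $f_n'$ yields $x_{g_n}=a_n$ for $n=1,\dots,s$ with the witness $x=(f(gh_a\theta_0))_{g}$, so Proposition~\ref{prop: non-tameness iff} gives non-tameness.

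The hard part will be controlling the discontinuity set $D_f$ and establishing invariance under no rotation, since the $\{0,1\}$-valued functions jump between every pair of consecutive constancy intervals and can create discontinuity points accumulating from inside each annulus, a priori forcing $GD_f$ to be large or even all of $\T$. To handle this I would argue that the newly created discontinuity points, for each fixed $n$, lie on the finitely or countably many spheres $\{\omega:\rho(\omega,g_n\theta)=r_i^n\}$; the crucial point is that each such sphere is hit by a given orbit in a controlled way, and that because $G$ is \emph{countable} the full set $D_f$ — being contained in a countable union of spheres each of which meets the countable orbit $G\theta_0$ in a null, orbit-respecting fashion — keeps $GD_f$ a countable union of orbits, hence $m_\T$-null. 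More precisely, I expect $D_f$ to be contained in $G\theta$ together with a countable family of spheres whose $G$-saturation remains measurable of measure zero by ergodicity; one must verify that $\theta_0$ can be chosen so that $G\theta_0\cap D_f=\emptyset$, keeping $f$ a genuine semicocycle. For invariance under no rotation, I would re-run Claim~\ref{claim: invariance by topological means}: the point is that $\Theta\subseteq U_\Theta$ and the three-case transitivity argument used there depends only on the disjointness and separation of balls, not on whether the building blocks are real- or $\{0,1\}$-valued, so Proposition~\ref{prop: criterion imnvariance under no rotation} still yields $\#[\theta]=1$ for all $\theta$.

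Finally, I would assemble the pieces: Theorem~\ref{thm: equivalence semicocylce extension and almost automorphic system} upgrades the invariance-under-no-rotation semicocycle $f$ to an almost automorphic extension $(X_f,G)$ of $(\T,G)$, the symbolic value range $\{0,1\}=K$ makes it a genuine subshift, non-tameness follows from Proposition~\ref{prop: non-tameness iff} as above, and regularity follows from the Remark characterising regularity as $m_\T(GD_f)=0$ together with the countability of $G$ ensuring $GD_f$ is a measurable null union of orbits. The en-passant recovery of \cite[Theorem~3.1]{Paul1976} is then immediate by specialising $\T$ to a minimal $\Z$-rotation on a compact metrizable monothetic group, where $G=\Z$ is countable, so the corollary produces the desired almost automorphic symbolic extension. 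I expect the genuinely delicate bookkeeping to be the precise measure-theoretic estimate on $GD_f$, and I would isolate it as the technical heart, with everything else being a faithful transcription of the earlier $[0,1]$-valued construction.
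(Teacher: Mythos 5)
Your overall strategy is exactly the paper's: run the construction of Theorem~\ref{thm: non-tame and non-null examples} with the $\{0,1\}$-valued blocks $f_n'$, observe that the non-tameness argument and Claim~\ref{claim: invariance by topological means} survive unchanged (the latter because $\Theta\ssq D_f\ssq U_\Theta$ still holds), and use countability of $G$ to make $GD_f$ null, which simultaneously provides a legitimate base point $\theta_0$ and regularity. You also correctly identify the one genuinely new difficulty, namely that the jumps of $f_n'$ create discontinuity points on the spheres $\{\omega:\rho(\omega,g_n\theta)=r_i^n\}$. However, your proposed resolution of that difficulty does not work. You claim the $G$-saturations of these spheres are ``measurable of measure zero by ergodicity'' and that $GD_f$ is ``a countable union of orbits.'' Neither is right: a sphere is in general uncountable, so its saturation is not a countable union of orbits; and since the saturation $G\cdot S$ of a sphere $S$ is a $G$-invariant set, ergodicity only forces $m_\T(G\cdot S)\in\{0,1\}$ --- if $S$ itself has positive measure the saturation has measure \emph{one}. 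Positive-measure spheres do occur in exactly the setting of this corollary: for $\T=\{0,1\}^\N$ with the usual ultrametric, the sphere of radius $2^{-n}$ about any point is a cylinder of positive Haar measure. So with an unlucky choice of radii your $GD_f$ can be conull (or all of $\T$), and the construction collapses.

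The missing idea, which is the entire content of the paper's extra hypothesis \ref{cond:R5}, is that the radii must be \emph{chosen} so that the boundary sets $\overline{B_{r_i^n}(\theta)}\setminus B_{r_i^n}(\theta)$ are $m_\T$-null. This is always achievable: for a fixed centre $\theta$ these sets are pairwise disjoint as $r$ varies, so, $m_\T$ being finite, at most countably many radii give positive measure, and the countable family $(r_i^n)$ can be taken to avoid them; transitivity and isometry of the $E(\T)$-action (Remark~\ref{rem: action of ET on T}) then make the null condition uniform over all centres. Once this is imposed, $D_f$ is the union of the countable set $\Theta$ with countably many null sets, countability of $G$ gives $m_\T(GD_f)=0$, and the rest of your argument (choice of $\theta_0$ outside $GD_f$, invariance under no rotation via Claim~\ref{claim: invariance by topological means} and Proposition~\ref{prop: criterion imnvariance under no rotation}, non-tameness via Proposition~\ref{prop: non-tameness iff}, regularity) goes through as you describe. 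Your auxiliary observations --- that countability of $G$ forces $\T$ to have more than one orbit, and that orbits are measurable and null --- are correct and fill in hypotheses the paper leaves implicit.
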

\begin{proof}
The proof works almost literally as the proof of Theorem~\ref{thm: non-tame and non-null examples} if we
construct $f$ by means of the functions $f_n'$ instead of $f_n$ and if we assume that the family of radii $(r_i^n)$ verifies
not only \ref{cond:R0}--\ref{cond:R4} but also
\begin{enumerate}[label=\textnormal{(R\arabic*)}, resume]
\item \label{cond:R5} For every $\theta\in \T$ and all $i,n\in \N$
we have $m_{\T}(\overline{B_{r_i^n}(\theta)}\setminus B_{r_i^n}(\theta))=0$.
\end{enumerate}
Note that \ref{cond:R5} holds as soon as there is just one $\theta\in \T$ with
$m_{\T}(\overline{B_{r_i^n}(\theta)}\setminus B_{r_i^n}(\theta))=0$ 
(see also Remark~\ref{rem: action of ET on T}).
This can clearly be realised since $m_\T$ is finite and the family $(r_i^n)$ is countable.

We leave the remaining details of the proof to the reader but would like to make the following comments:
\begin{itemize}
 \item Now, the set of discontinuity points $D_{f}$ is the union of the countable set $\Theta$ and countably many 
 sets of the form $\overline{B_{r_i^n}(\theta)}\setminus B_{r_i^n}(\theta)$ which are assumed to be of $m_\T$-measure
 zero due to \ref{cond:R5}.
 Hence, as $G$ is countable, $m_{\T}(G D_{f})=0$ so that there is a point $\theta_0$ along whose 
 orbit $f$ is indeed continuous.
 \item Since $\Theta\ssq D_{f}\ssq U_\Theta$ (see equation \eqref{eq: defn U Theta}), we obtain that $f$ is indeed 
 invariant under no rotation due to Claim~\ref{claim: invariance by topological means} and 
 Proposition~\ref{prop: criterion imnvariance under no rotation}.
 Hence, $(X_{f},G)$ is an almost automorphic extension of $(\T,G)$ which is
 further regular since $m_{\T}(G D_{f})=0$. \qedhere
\end{itemize}
\end{proof}
\begin{rem}
 For the special case of irrational rotations on $\T=\R/\Z$, a similar result was announced in
\cite[Remark~5.8]{Glasner2018}.
 As a matter of fact, it has been proven in \cite[Corollary~3.7]{FuhrmannGlasnerJagerOertel2018} that a
 symbolic extension $(X_f,\Z)$ of an irrational rotation on $\R/\Z$ is non-tame if $D_f$ is a Cantor set.
 In the light of the above statement and the fact that already a single discontinuity point can destroy nullness
 (according to Theorem~\ref{thm:tame_non-null_example}),
 this seems to suggest that the question of whether an almost automorphic system is tame or null 
 simply boils down to a question of the \emph{size} of $D_f$.
 However, the mechanism which establishes the non-tameness in \cite[Corollary~3.7]{FuhrmannGlasnerJagerOertel2018}
 can easily be destroyed by placing countably many points in the gaps of the respective
 Cantor set.
 That is, for every prescribed value $\gamma$ between $0$ and $1$, there is a $\{0,1\}$-valued semicocycle $f$
 such that $D_f$ is of Hausdorff measure $\gamma$ while $(X_f,\Z)$ is
 a symbolic tame extension of an irrational rotation.
\end{rem}

To obtain tame non-null examples, we have to introduce a stronger version of the assumption \ref{cond:R5} in 
order to still be able to apply Lemma~\ref{lem: criterion for tameness}.
This boils down to restrictions on the space $\T$
which, on the other, also allow for symbolic non-tame extensions even if $G$ is uncountable.

\begin{cor}\label{cor: symbolic non-null extensions}
If $\T$ is a Cantor set and 
$(\T,G)$ is a minimal equicontinuous dynamical system with at least two distinct orbits,
then there is a symbolic almost one-to-one extension $(X_{f},G)$ which is non-tame.

If, additionally, for one (and hence every) point $\theta\in \T$ the orbit $G\theta\ssq \T$ is measurable, then
$(X_{f},G)$ can be chosen to be regular.

If $G$ acts locally almost freely on some point $\theta\in \T$,
then all of the above holds true if we replace \emph{non-tame} by \emph{tame non-null}.
\end{cor}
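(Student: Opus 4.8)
The plan is to transcribe the constructions behind Theorem~\ref{thm: non-tame and non-null examples} and Theorem~\ref{thm:tame_non-null_example} to the $\{0,1\}$-valued setting, the only genuinely new ingredient being a choice of radii that prevents the step functions $f_n'$ from producing discontinuity points outside the prescribed orbit. The key reduction is a strengthening of \ref{cond:R5}: since $\T$ is a Cantor set and $E(\T)$ acts transitively and isometrically on it (Remark~\ref{rem: action of ET on T}), I would fix the compatible $G$-invariant metric $\rho$ to be an \emph{ultrametric}. This is possible because $E(\T)$ is a compact group acting transitively on the totally disconnected space $\T\cong E(\T)/\Stab(\theta)$; its identity component $E(\T)^0$ fixes $\theta$, so after passing to the profinite quotient $E(\T)/E(\T)^0$ one realises $\T$ as an inverse limit of finite homogeneous $G$-spaces, which carries a left-invariant ultrametric. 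For such $\rho$ every ball $B_r(\theta)$ is clopen, the distance spectrum $\{\rho(\theta,\omega):\omega\in\T\}$ is a discrete subset of $[0,\infty)$ accumulating only at $0$, and the map $\omega\mapsto\rho(\omega,c)$ is locally constant on $\T\setminus\{c\}$. In particular \ref{cond:R0}--\ref{cond:R4} can be realised with all spheres $\overline{B_{r_i^n}(\theta)}\setminus B_{r_i^n}(\theta)$ \emph{empty}.

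With these radii in place I would first treat the non-tame assertion. Running the proof of Theorem~\ref{thm: non-tame and non-null examples} with $f_n'$ in place of $f_n$, the semicocycle $f$ of \eqref{eq: defn non-tame semicocycle} becomes $\{0,1\}$-valued; since each $f_n'(\rho(\,\cdot\,,g_n\theta))$ is locally constant away from its centre and its jumps sit only on the (empty) spheres, no new discontinuities appear and $D_f=\Theta$ stays countable \emph{even when $G$ is uncountable}. Invariance under no rotation then follows verbatim from Claim~\ref{claim: invariance by topological means} (whose proof uses only $\Theta\ssq D_f\ssq U_\Theta$, see \eqref{eq: defn U Theta}) together with Proposition~\ref{prop: criterion imnvariance under no rotation}, so $(X_f,G)$ is an almost automorphic extension of $(\T,G)$ by Theorem~\ref{thm: equivalence semicocylce extension and almost automorphic system}; non-tameness is obtained from the $f_n'$-analogue of Lemma~\ref{lem: free interval of f_n} and Proposition~\ref{prop: non-tameness iff}. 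For regularity, note that $GD_f=G\Theta=G\theta$ is a single orbit, which is $m_\T$-null once orbits are measurable (being an invariant set for the ergodic measure $m_\T$ in a space with more than one orbit, exactly as in the proof of Theorem~\ref{thm:tame_non-null_example}).

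For the tame non-null statement, under the extra local almost freeness hypothesis, I would instead mimic Theorem~\ref{thm:tame_non-null_example}, again with $f_n'$ and empty spheres. Because the blocks $\bar f_n'$ are supported in intervals $J_s$ bounded away from $0$ and jump only across empty spheres, the resulting $f$ is continuous on all of $\T\setminus\{\theta\}$, whence $D_f=\{\theta\}$ is a \emph{single} point. This is precisely the configuration required by Lemma~\ref{lem: criterion for tameness}: $D_f$ is countable, $G\theta'\cap D_f$ is finite for every $\theta'$, and $G$ acts locally almost freely on $\theta$; hence $(X_f,G)$ is tame. Non-nullness then follows as in Theorem~\ref{thm:tame_non-null_example} from the $f_n'$-analogue of Lemma~\ref{lem: free interval of f_n} and Proposition~\ref{prop: non-null iff}, and regularity again from $m_\T(GD_f)=m_\T(G\theta)=0$.

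The main obstacle is the first step. In contrast to Corollary~\ref{cor: symbolic non-tame extensions}, where countability of $G$ allowed the spheres to merely have $m_\T$-measure zero, here I genuinely need them to be \emph{empty}, so that $D_f$ remains countable—indeed a single point in the tame non-null case—which is exactly what makes Lemma~\ref{lem: criterion for tameness} applicable and simultaneously removes any countability assumption on $G$. Thus securing the invariant ultrametric with clopen balls is the heart of the argument; everything downstream is a faithful, and largely mechanical, adaptation of the earlier proofs.
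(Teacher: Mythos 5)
Your proposal is correct and follows essentially the same route as the paper: strengthen the radius conditions so that every ball $B_{r_i^n}(\theta)$ is clopen (the paper's condition (R6'), your ``empty spheres''), observe that the $\{0,1\}$-valued blocks $f_n'(\rho(\,\cdot\,,\theta))$ are then continuous off their centres so that the proofs of Theorem~\ref{thm:tame_non-null_example} and Theorem~\ref{thm: non-tame and non-null examples} translate literally, and conclude regularity from $m_\T(GD_f)=0$. The only difference is how the invariant ultrametric is produced: the paper simply takes $\rho(\theta,\theta')=\sup_{g\in G} d(h(g\theta),h(g\theta'))$ for a homeomorphism $h\:\T\to\{0,1\}^\N$, which takes values in the discrete-away-from-zero set $\{0\}\cup\{2^{-\ell}\}$, whereas you pass through the profinite quotient $E(\T)/E(\T)^0$ --- both yield the same conclusion.
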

\begin{proof}
Let us assume to be given a 
family of radii $(r_i^n)$ which verifies
not only \ref{cond:R0}--\ref{cond:R4} but also
\begin{enumerate}[label=\textnormal{(R6')}]
\item \label{cond:R5 prime} 
For every $\theta\in \T$ and all $i,n\in \N$ the ball $B_{r^n_i}(\theta)$ is clopen. 
\end{enumerate}
Under the assumption of \ref{cond:R5 prime}, functions of the form $\omega\mapsto f_n'(\rho(\omega,\theta))$ are continuous
on $\T\setminus \{\theta\}$ so that the proofs of
Theorem~\ref{thm:tame_non-null_example} and
Theorem~\ref{thm: non-tame and non-null examples}, respectively, translate literally to the present setting.

To see that \ref{cond:R5 prime} can always be ensured under the above hypothesis,
note that we may assume without loss of generality that $\T$ is equipped
with the compatible $G$-invariant metric $\rho$ given by
\[
 \rho(\theta,\theta')=\sup_{g\in G} d(h(g\theta),h(g\theta')),
\]
where $h$ denotes a homeomorphism from $\T$ to $\{0,1\}^\N$
and $d$ denotes the Cantor metric $d(x,y)=2^{-\min\{n\: x_n\neq y_n\}}$ on $\{0,1\}^\N$
which only assumes values in $\{0\}\cup\{1/2^\ell\: \ell \in \N \}$.
Hence, we can assume the metric $\rho$ to assume only countably many values, too.
This certainly allows to guarantee 
that for some $\theta\in\T$ and all $i,n\in\N$, the balls $B_{r^n_i}(\theta)$ are clopen.
As in the previous examples, Remark~\ref{rem: action of ET on T} yields that this carries over to all $\theta\in \T$ so that
\ref{cond:R5 prime} can be realised.
\end{proof}

If $\T$ is a Cantor set and $G$ a discrete countable group, then
the family of free minimal equicontinuous systems $(\T,G)$ is well understood:
the group $G$ is necessarily residually finite and $(\T,G)$ is isomorphic
to a $G$-odometer (see \cite[Theorem 2.7]{CortezMedynets2016}). 
The next statement follows from \cite[Theorem 2.7]{CortezMedynets2016} and
Corollary~\ref{cor: symbolic non-tame extensions} and Corollary~\ref{cor: symbolic non-null extensions}
combined with a characterisation of \emph{Toeplitz flows} as symbolic almost one-to-one extensions
of free minimal $G$-odometers.
For a thorough discussion of $G$-odometers and Toeplitz flows over residually finite groups, we refer
the reader to \cite{Krieger2010,CortezPetite2014,CortezMedynets2016}. 

\begin{cor}\label{cor: Toepltz non-tame non-null extensions}
Let $G$ be a countable discrete group and assume $\T$ is a Cantor set.
Then every free minimal equicontinuous $G$-action $(\T,G)$ is the maximal equicontinuous 
factor of regular Toeplitz flows $(X_1,G)$ and $(X_2,G)$ such that $(X_1,G)$ is non-tame and $(X_2,G)$ is
tame but non-null.
\end{cor}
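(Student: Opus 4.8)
The plan is to assemble the statement from the three preceding corollaries together with the structure theory of free minimal equicontinuous actions on the Cantor set. First I would invoke \cite[Theorem~2.7]{CortezMedynets2016}: since $\T$ is a Cantor set and $G$ is countable and discrete, any free minimal equicontinuous action $(\T,G)$ forces $G$ to be residually finite and $(\T,G)$ to be (isomorphic to) a $G$-odometer. The remaining hypotheses of Corollary~\ref{cor: symbolic non-null extensions} are then automatic: freeness of the action means $\Stab_G(\theta)=\{e_G\}$ for every $\theta\in\T$, so $G$ certainly acts locally almost freely on every point; since $\T$ is an uncountable Cantor set while each orbit $G\theta$ is countable (as $G$ is countable), there are at least two --- in fact uncountably many --- distinct $G$-orbits in $\T$; and since a countable discrete group is $\sigma$-compact, every orbit is a countable union of compact singletons and hence measurable, as recorded in the remark following Theorem~\ref{thm:tame_non-null_example}.

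With these hypotheses verified, I would apply Corollary~\ref{cor: symbolic non-null extensions} twice. Its first assertion yields a symbolic almost one-to-one extension $(X_1,G)$ of $(\T,G)$ which is non-tame, and by the measurability of orbits this extension may be chosen regular (alternatively, $(X_1,G)$ can be produced directly from Corollary~\ref{cor: symbolic non-tame extensions}, whose hypotheses --- $G$ countable, $(\T,G)$ metric minimal isometric with $\T$ infinite --- are equally met). Its final assertion, whose extra hypothesis that $G$ act locally almost freely on some point is guaranteed by freeness, yields a second symbolic almost one-to-one extension $(X_2,G)$ of $(\T,G)$ which is tame but non-null, again regular. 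Both $X_1$ and $X_2$ are $\{0,1\}$-valued and hence genuine subshifts over a finite alphabet.

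It remains to recognise $(X_1,G)$ and $(X_2,G)$ as Toeplitz flows, and to pin down the maximal equicontinuous factor. For the latter, Proposition~\ref{prop:almost_automorphic} identifies $(\T,G)$ as the maximal equicontinuous factor of each $(X_i,G)$, since each is an almost one-to-one extension of the minimal equicontinuous system $(\T,G)$. For the former, I would use the characterisation of Toeplitz flows over residually finite groups (see \cite{Krieger2010,CortezPetite2014,CortezMedynets2016}) as precisely the symbolic minimal systems arising as almost one-to-one extensions of free minimal $G$-odometers. As $(\T,G)$ is such an odometer and each $(X_i,G)$ is a finite-alphabet almost one-to-one extension of it, each $(X_i,G)$ is by definition a Toeplitz flow over $G$, which completes the argument. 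The one genuinely non-formal step is this last identification: one must check that the symbolic almost one-to-one extensions produced by Corollary~\ref{cor: symbolic non-null extensions} fall under the accepted notion of a Toeplitz flow over $G$, i.e. that the distinguished point $\theta_0$ --- along whose orbit the semicocycle $f$ is continuous --- gives rise to a Toeplitz sequence in $\{0,1\}^G$. This is exactly the content of the cited characterisation, and it is here that the residual finiteness supplied by \cite[Theorem~2.7]{CortezMedynets2016} enters.
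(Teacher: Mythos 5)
Your proposal is correct and follows essentially the same route as the paper: the paper likewise derives the corollary from \cite[Theorem~2.7]{CortezMedynets2016} together with Corollary~\ref{cor: symbolic non-tame extensions} and Corollary~\ref{cor: symbolic non-null extensions}, plus the characterisation of Toeplitz flows as symbolic almost one-to-one extensions of free minimal $G$-odometers. Your explicit verification of the hypotheses (locally almost free from freeness, at least two orbits from countability of $G$ versus uncountability of $\T$, measurability of orbits) is accurate and merely spells out what the paper leaves implicit.
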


The point in the proof of Corollary~\ref{cor: symbolic non-null extensions}
is that despite the fact that $f$ only assumes values in $\{0,1\}$,
the set of discontinuity points is still only countable or even finite.
While in general, such a straightforward argument is not available, we obtain
\begin{cor}
	Every irrational rotation on $\R/\Z$ allows for a symbolic almost one-to-one extension
	which is tame but non-null.
\end{cor}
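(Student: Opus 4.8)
The plan is to run the construction of Theorem~\ref{thm:tame_non-null_example} over $\T=\R/\Z$ with $G=\Z$ acting by an irrational rotation, but with the symbolic building blocks $f_n'$ in place of $f_n$. Here $\Z$ acts freely on every point (a nontrivial rotation by $n\alpha$ has no fixed point), hence locally almost freely everywhere; there are uncountably many orbits; and by Theorem~\ref{thm: representation of equicont sys}~(b) the Ellis semigroup is $E(\T)=\R/\Z$ acting by all rotations. I would fix a point $\theta$, a sequence $(g_n)$ in $\Z$ with $g_n\theta\to\theta$ and pairwise distinct $g_n\theta$, and radii $(r_i^n)$ obeying \ref{cond:R0}--\ref{cond:R4}, then set $f(\w)=\sum_{n\ge 1}\bar f_n'(\rho(\w,g_n\theta))$ for $\w\neq\theta$ and $f(\theta)=0$, where $\bar f_n'$ is the cutoff of $f_n'$ to the interval $J_s$ as in Theorem~\ref{thm:tame_non-null_example}. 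Because the supporting balls are pairwise disjoint and each $\bar f_n'$ vanishes near $0$, the sum takes values in $\{0,1\}$, so $f$ is a genuine \emph{symbolic} semicocycle, continuous off the countable set $D_f=\{\theta\}\cup\{g_n\theta\pm r_i^n\}$ of its jump points.

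The key structural observation is that $\theta$ is the \emph{unique accumulation point} of $D_f$: for fixed $n$ there are only finitely many jump points near $g_n\theta$, whereas for $n\to\infty$ one has $g_n\theta\to\theta$ and the relevant radii tend to $0$, so every limit of jump points equals $\theta$. I would exploit this to obtain invariance under no rotation directly, bypassing the single-point argument used in Theorem~\ref{thm:tame_non-null_example}. Since $E(\T)$ consists of rotations, $\theta_1\sim\theta_2$ means precisely that $F$ is periodic with period $\theta_2-\theta_1$; a nonzero period of $F$ would render $D_f$, and hence its unique accumulation point $\{\theta\}$, invariant under that period, forcing the period to vanish. Thus $\sim$ is trivial and, by Theorem~\ref{thm: equivalence semicocylce extension and almost automorphic system}, $(X_f,\Z)$ is an almost one-to-one extension of $(\R/\Z,\Z)$. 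Non-nullness then follows verbatim as in Theorem~\ref{thm:tame_non-null_example}, using the evident analogue of Lemma~\ref{lem: free interval of f_n} for $f_n'$ together with Proposition~\ref{prop: non-null iff} and $V_0=\{0\}$, $V_1=\{1\}$.

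It remains to secure tameness through Lemma~\ref{lem: criterion for tameness}. Its hypotheses on local almost freeness along $D_f$ and on countability of $D_f$ are immediate; the real content is that every orbit must meet $D_f$ only finitely often. A jump point $g_n\theta+\varepsilon r_i^n$ (with $\varepsilon=\pm1$) lies in the orbit of $\w$ exactly when $\varepsilon r_i^n\in(\theta-\w)+\Z\alpha \pmod 1$, so the number of hits of $G\w$ equals the number of signed radii $\varepsilon r_i^n$ falling into a single coset of $\Z+\Z\alpha$ in $\R$. The plan is therefore to choose the radii \emph{generically}: using the freedom in \ref{cond:R4} to place each newly introduced radius in a prescribed subinterval, I would inductively avoid the countably many bad values that would make two signed radii of differing value congruent modulo $\Z+\Z\alpha$ (or make some $\varepsilon r_i^n$ itself lie in $\Z+\Z\alpha$). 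Since the refinement constraints \ref{cond:R2}--\ref{cond:R3} reuse each radius value at only finitely many levels, each coset then receives only finitely many signed radii, so $G\w\cap D_f$ is finite for every $\w$, and Lemma~\ref{lem: criterion for tameness} yields tameness.

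I expect the diophantine choice of radii to be the main obstacle: it is exactly the point where the connectedness of $\R/\Z$ obstructs the clean argument of Corollary~\ref{cor: symbolic non-null extensions}, in which clopen balls removed all boundary discontinuities outright. One must check that the combinatorial bookkeeping of \ref{cond:R0}--\ref{cond:R4} leaves enough room to impose the extra coset-avoidance conditions without disturbing the nestedness of the radii; this is a routine but slightly delicate diagonalisation. Finally, regularity comes for free: $GD_f$ is a countable union of (Lebesgue-null) $\Z$-orbits, so $m_\T(GD_f)=0$, and the regularity criterion recorded in the remark after Theorem~\ref{thm: equivalence semicocylce extension and almost automorphic system} applies.
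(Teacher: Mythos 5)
Your proposal is correct and follows essentially the same route as the paper: rerun the construction of Theorem~\ref{thm:tame_non-null_example} with the $\{0,1\}$-valued blocks $f_n'$ and impose a coset condition on the radii (the paper requires $\Z\alpha+r_i^n$ and $\Z\alpha+r_j^m$ to be disjoint for distinct values) so that every orbit meets the countable set $D_f$ only finitely often, after which Lemma~\ref{lem: criterion for tameness} gives tameness and the intervals $I_a^{\alpha(s)}$ give non-nullness via Proposition~\ref{prop: non-null iff}. Your accumulation-point argument for invariance under no rotation fills in a detail the paper explicitly leaves to the reader, but it is not a different method.
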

\begin{proof}
As in the previous examples, we leave the details to the reader and 
only briefly discuss the differences to the proofs in the previous section.
Suppose we have a rotation by an irrational angle $\alpha$.
We proceed similarly as in the proof of Theorem~\ref{thm:tame_non-null_example} where
we replace the functions $f_n$ by $f_n'$.
This time, we choose the family $(r_i^n)$ to satisfy \ref{cond:R0}--\ref{cond:R4} and
further assume that for distinct $r_i^n\neq r_j^m$ we have that
$\Z \alpha+ r_i^n \cap \Z\alpha + r_j^m=\emptyset$.
As $r_1^n\to 0$ (due to \ref{cond:R0}), this ensures that every orbit hits the
countable set of the respective discontinuity points
at most finitely many times which again allows the application
of Lemma~\ref{lem: criterion for tameness}.
\end{proof}

\bibliography{tameness-in-semi-cocycles}{}
\bibliographystyle{unsrt}
\end{document}